\crefname{section}{Section}{Sections}
\crefname{subsection}{\S}{\S\S}
\crefname{subsubsection}{\S}{\S\S}
\theoremstyle{plain}
\newtheorem{lemma}{Lemma}[section]
\newtheorem{proposition}[lemma]{Proposition}
\newtheorem{corollary}[lemma]{Corollary}
\newtheorem{theorem}[lemma]{Theorem}
\theoremstyle{plain}
\newtheorem{theoremN}{Theorem}
\theoremstyle{plain}
\newtheorem{definition}[lemma]{Definition}
\newtheorem{remark}[lemma]{Remark}
\newtheorem{remarks}[lemma]{Remarks}
\newtheorem{convention}[lemma]{Convention}
\newtheorem{notation}[lemma]{Notation}
\newtheorem{construction}[lemma]{Construction}
\newtheorem{recollection}[lemma]{Recollection}
\crefname{definition}{definition}{definitions}
\crefname{ex}{example}{examples}
\crefname{exs}{example}{examples}
\crefname{remark}{remark}{remarks}
\crefname{remarks}{remark}{remarks}
\crefname{convention}{convention}{conventions}
\crefname{notation}{notation}{notations}
\crefname{table}{table}{tables}
\crefname{lemma}{lemma}{lemmas}
\crefname{proposition}{proposition}{propositions}
\crefname{propositionN}{proposition}{propositions}
\crefname{corollary}{corollary}{corollaries}
\crefname{corollaryN}{corollary}{corollaries}
\crefname{theorem}{theorem}{theorems}
\crefname{theoremN}{theorem}{theorems}
\crefname{enumi}{}{}
\crefname{assumption}{assumption}{Assumptions}
\crefname{construction}{construction}{Constructions}
\crefname{recollection}{recollection}{Recollections}
\crefname{equation}{}{}
\numberwithin{equation}{section}
\theoremstyle{nonumberplain}
\newtheorem{proof}{Proof}
\newcommand\pf[1]{\newtheorem{#1}{Proof of \Cref{#1}}}
\newcommand\bC{{\mathbb C}}
\newcommand\bP{{\mathbb P}}
\newcommand\bR{{\mathbb R}}
\newcommand\bZ{{\mathbb Z}}
\newcommand\cC{{\mathcal C}}
\newcommand\cE{{\mathcal E}}
\newcommand\cF{{\mathcal F}}
\newcommand\cK{{\mathcal K}}
\newcommand\cL{{\mathcal L}}
\newcommand\cO{{\mathcal O}}
\DeclareMathOperator{\id}{id}
\DeclareMathOperator{\im}{im}
\DeclareMathOperator{\End}{\mathrm{End}}
\DeclareMathOperator{\Hom}{\mathrm{Hom}}
\DeclareMathOperator{\Epi}{\mathrm{Epi}}
\DeclareMathOperator{\Aut}{\mathrm{Aut}}
\DeclareMathOperator{\Proj}{\mathrm{Proj}}
\def\End{\operatorname {End}}
\def\Ext{\operatorname {Ext}}
\def\rk{\operatorname {rk}}
\newcommand{\qedhere}{\mbox{}\hfill\ensuremath{\blacksquare}}
\newcommand{\xrightarrowdbl}[2][]{%
  \xrightarrow[#1]{#2}\mathrel{\mkern-14mu}\rightarrow
}
\title{Symplectic leaves in projective spaces of bundle extensions}
\author{Alexandru Chirvasitu}
\begin{document}

\date{}

\newcommand{\Addresses}{{
  \bigskip
  \footnotesize

  \textsc{Department of Mathematics, University at Buffalo}
  \par\nopagebreak
  \textsc{Buffalo, NY 14260-2900, USA}  
  \par\nopagebreak
  \textit{E-mail address}: \texttt{achirvas@buffalo.edu}


}}

\maketitle

\begin{abstract}
  Fix a stable degree-$n$ rank-$k$ bundle $\mathcal{F}$ on a complex elliptic curve for (coprime) $1\le k<n\ge 3$. We identify the symplectic leaves of the Poisson structure introduced independently by Polishchuk and Feigin-Odesskii on $\mathbb{P}^{n-1}\cong \mathbb{P}\mathrm{Ext}^1(\mathcal{F},\mathcal{O})$ as precisely the loci classifying extensions $0\to \mathcal{O}\to \mathcal{E}\to \mathcal{F}\to 0$ with $\mathcal{E}$ fitting into a fixed isomorphism class, verifying a claim of Feigin-Odesskii. We also classify the bundles $\mathcal{E}$ which do fit into such extensions in geometric / combinatorial terms, involving their Harder-Narasimhan polygons introduced by Shatz.
\end{abstract}

\noindent {\em Key words: elliptic curve; vector bundle; stable; semistable; slope; Poisson structure; symplectic leaf; geometric quotient; geometric invariant theory; Harder-Narasimhan filtration; Harder-Narasimhan polygon; smooth; faithfully flat; section}

\vspace{.5cm}

\noindent{MSC 2020: 14H52; 53D17; 14J42; 53D30; 14H60; 14L30}


\section*{Introduction}

In \cite{fo89_en} Feigin and Odesskii introduced and initiated the study of their far-reaching generalization of Sklyanin's \cite{Skl82,Skl83} eponymous algebras: a family $Q_{n,k}(E,\eta)$ of deformations of the polynomial ring $S:=\bC[x_i,\ 1\le i\le n]$ for each fixed coprime $1\le k<n\ge 3$ and complex elliptic curve $E\cong \bC/\bZ\oplus \bZ\tau$, with $\eta\in E$ as the deformation parameter and recovering the original polynomial ring at $\eta=0\in E$. As familiar from {\it deformation quantization} \cite[\S 1]{weinst_defquant}, this machinery equips $S$ with a {\it Poisson structure}; being homogeneous, that structure will then descend to make $\bP^{n-1}\cong \Proj S$ \cite[Example II.2.5.1]{hrt} into a {\it Poisson manifold} \cite[Definition 1.15]{lgpv_poiss-bk}. For lack of an established term, we refer to this as the {\it deformation (FO) Poisson structure} on $\bP^{n-1}$.

It is a measure of the breadth and reach of the subject, and of its amenability to deep and perhaps unexpected connections, that the deformation Poisson structure can apparently also be recovered in purely geometric terms: \cite[Introduction]{FO98} (and, independently, \cite[\S 2]{pl98}) identifies the ambient $\bP^{n-1}$ with the projectivization
\begin{equation}\label{eq:p0p1}
  \bP H^0(\cF)^*\cong \bP \Ext^1(\cF,\cO)
  \quad
  \left(\text{{\it Serre duality} \cite[Theorem III.7.6]{hrt}}\right)
\end{equation}
for a degree-$n$ rank-$k$ {\it stable} \cite[Definition 5.3.2]{lepot-vb} vector bundle $\cF$ on $E$ and equips that space with what we will refer to as its {\it bundle Poisson structure} (the two sources \cite{FO98,pl98} generalize the construction in different directions). 

There is much literature \cite{HP1,HP2,hp_23,2306.14719v1,MR4197565,2310.05284v1,pol2022,Pym18,Pym17,safr_poisslie} that touches on these ideas in one way or another and, as noted in \cite[discussion following Theorem 4.3]{2310.05284v1}, there does not seem to be much doubt as to the coincidence of the deformation and bundle structures. Nevertheless, the result seems to have been proved formally in print only for $k=1$, as \cite[Theorem 5.2]{HP1}.

The present paper is concerned with the bundle Poisson structure, and specifically with its {\it symplectic leaves} \cite[Proposition 1.3]{weinst_loc}: the maximal connected {\it immersed} \cite[Chapter 2, post Theorem 10]{spiv_dg_1999} submanifolds whose tangent spaces at the points $p\in \bP^{n-1}$ are the images of the respective maps $\left(T_p\bP^{n-1}\right)^*\to T_p\bP^{n-1}$ that constitute the Poisson structure. The classification (\Cref{th:detleaves}) reads:

\begin{theoremN}\label{thn:leaves}
  Fix coprime $1\le k<n\ge 3$ and a stable degree-$n$ rank-$k$ bundle on the complex elliptic curve $E$. 
  
  The symplectic leaves of the bundle Poisson structure of \cite[\S 2]{pl98} on \Cref{eq:p0p1} are the non-empty spaces
  \begin{equation*}
    L(\cE)
    :=
    \text{elements of \Cref{eq:p0p1} classifying non-split exact sequences }
    0\to \cO\to \cE\to \cF\to 0
  \end{equation*}
  via the usual \cite[\S III.5]{gm_halg_2e_2003} correspondence between $\Ext^1$ and extensions.  \qedhere
\end{theoremN}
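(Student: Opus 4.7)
The plan is to establish, for every $[e]\in \bP\Ext^1(\cF,\cO)$ corresponding to an extension $\xi\colon 0\to\cO\xrightarrow{\iota}\cE\xrightarrow{\pi}\cF\to 0$, that the image of the Poisson map $T^*_{[e]}\to T_{[e]}$ equals the tangent space $T_{[e]}L(\cE)$. Combined with connectedness of each non-empty $L(\cE)$, this identifies the symplectic leaves with the $L(\cE)$: the inclusion of the Poisson image into $T_{[e]}L(\cE)$ means that $L(\cE)$ is a union of leaves, and the reverse inclusion plus connectedness forces it to be a single leaf.

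For $T_{[e]}L(\cE)$, I would use that $L(\cE)$ is the fibre over $[\cE]$ of the natural classifying map $\varphi$ from $\bP\Ext^1(\cF,\cO)$ to the moduli stack $\cM$ of rank-$(k+1)$, degree-$n$ bundles on $E$. Its differential at $[e]$ is induced by the Yoneda composition $e'\mapsto \iota_*\pi^*e'\in\Ext^1(\cE,\cE)$ arising from $\xi$, so $T_{[e]}L(\cE)=\ker(d\varphi_{[e]})/\bC e$. Chasing the long exact sequences of $\Hom(-,\cE)$ and $\Hom(\cE,-)$ applied to $\xi$, and invoking stability of $\cF$ (so $\End(\cF)=\bC$), this kernel admits a concrete description coinciding with the infinitesimal orbit of $\Aut(\cE)$ acting on the pairs $(\iota,\pi)$ that present $\cE$ as such an extension. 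Connectedness of $L(\cE)$ then follows from the orbit description, or alternatively by identifying it with an open subset of $\bP H^0(\cE)$ via nowhere-vanishing sections.

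For the Poisson image, I would unpack the bundle Poisson structure of \cite{pl98}. Serre duality identifies $T^*_{[e]}$ with the hyperplane in $H^0(\cF)$ annihilating $e$, equivalently with $H^0(\cE)/H^0(\cO)$---the sections of $\cF$ that lift to sections of $\cE$. The Poisson map then admits an explicit description via Yoneda / cup-product composition applied to such lifts, and I would verify that its image equals the kernel identified above. This cohomological matching---essentially the compatibility of Polishchuk's construction with the classifying map to moduli---is the core technical step and the main obstacle I anticipate. Once in hand, non-emptiness of $L(\cE)$ (secured from the separate Harder--Narasimhan classification) and the GIT / smoothness tools alluded to in the keywords (ensuring $\varphi$ is well-behaved enough for the infinitesimal calculation to be literal) combine with connectedness of $L(\cE)$ to identify each non-empty $L(\cE)$ with the symplectic leaf through any of its points, via the general characterization of leaves as maximal connected integral submanifolds of the Poisson distribution.
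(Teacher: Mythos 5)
Your proposal follows essentially the same route as the paper: one containment comes from the Poisson bivector factoring through the connecting map and hence landing in the kernel of $H^1(\underline{\End}(\cE,\cO))\to H^1(\underline{\End}(\cE))$ (the deformation space of $\cE$), so the isomorphism class of the middle term is constant on leaves; the reverse containment comes from computing that the Poisson image at a point is exactly the tangent space to the iso-class locus, namely $\Gamma(\cE)/\End(\cE)\Phi$, and then using connectedness of the stable-section locus $\Gamma(\cE)_s$. The ``core technical step'' you flag---matching the Serre-dual hyperplane $\im(\Gamma(\cE)\to\Gamma(\cF))$ with the image of the Poisson map and with the infinitesimal $\Aut(\cE)$-orbit---is precisely what the paper carries out in \Cref{le:samehyp} and \Cref{pr:orbmap}, so your plan is correct but leaves that computation (the bulk of the work) to be filled in.
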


The eminently believable claim is (essentially) made as part of \cite[Theorem 1]{FO98} and again in passing on \cite[p.67, pre second paragraph]{FO98} in the broader context of bundles on $E$ with reductive/parabolic structure groups, can presumably be recovered for special pairs $(n,k) = (r^2s,rsm-1)$ from \cite[\S 3 and Corollary 5.2]{HP2}, and is proven in this form for $k=1$ in \cite[Theorem 1.1]{00-leaves_xv3}, as part of a more in-depth analysis of the relationship between the {\it homological leaves} \cite[\S 1.2]{00-leaves_xv3} $L(\cE)$ and the {\it secant varieties} \cite[Example 8.5]{H92} of the embedding
\begin{equation*}
  E\subset \bP^{n-1}\cong \bP H^0(\cF)^*,\quad \cF=\text{degree-$n$ line bundle}. 
\end{equation*}

Another phenomenon that transports over from the $k=1$ case is the fact that the homological leaf $L(\cE)$ is the {\it geometric quotient} \cite[Definition 0.6]{mumf-git} of a space of appropriately well-behaved sections of $\cE$ by the (free) action of $\Aut(\cE)$, in the sense familiar from geometric invariant theory. By contrast to the strategy adopted in \cite{00-leaves_xv3}, where the analogous geometric-quotient result \cite[Theorem 5.7]{00-leaves_xv3} {\it precedes} the symplectic-leaf statement \cite[Theorem 6.6]{00-leaves_xv3}, here we {\it deduce} \Cref{th:git} from \Cref{thn:leaves} (\Cref{th:detleaves}):

\begin{theoremN}\label{thn:git}
  In the setting and notation of \Cref{thn:leaves}, the map
  \begin{equation*}
    \left(\cO\xrightarrow{\Phi}\cE\right)
    \xmapsto{\quad}
    \left(\text{class of }0\to \cO\xrightarrow{\Phi} \cE\to \cF\to 0 \right)\in \bP^{n-1}\cong \bP\Ext^1(\cF,\cO)
  \end{equation*}
  realizes the homological leaf $L(\cE)$ as the geometric quotient of the space $\Gamma(\cE)_s\subset \Gamma(E)$ of {\it stable} sections (\Cref{def:stabsect}) by the natural $\Aut(\cE)$-action.  \qedhere
\end{theoremN}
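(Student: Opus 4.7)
The plan is to show that the map
\[
\pi\colon \Gamma(\cE)_s \longrightarrow L(\cE),\qquad \Phi\longmapsto \bigl[\,0\to \cO\xrightarrow{\Phi}\cE\to \cF\to 0\,\bigr]\in\bP\Ext^1(\cF,\cO),
\]
is a principal $\Aut(\cE)$-bundle; given that \Cref{thn:leaves} already exhibits $L(\cE)$ as a smooth locally closed subvariety of $\bP\Ext^1(\cF,\cO)$, the geometric-quotient conclusion will follow from standard descent. There are three things to verify: that $\pi$ is well-defined, surjective, and $\Aut(\cE)$-invariant with orbits as fibers; that the $\Aut(\cE)$-action on $\Gamma(\cE)_s$ is free; and that $d\pi$ is surjective with kernel exactly the orbit tangent directions.

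First I would dispose of the formal bookkeeping. The cokernel of $\Phi\in\Gamma(\cE)_s$ is isomorphic to $\cF$, and simplicity of $\cF$ kills the ambiguity of that cokernel identification modulo scalars, so the extension class is well-defined in the projectivization. Surjectivity of $\pi$ onto $L(\cE)$ is immediate from the definition of $L(\cE)$. Invariance amounts to noting that an $\alpha\in\Aut(\cE)$ induces an isomorphism of short exact sequences $0\to\cO\xrightarrow{\Phi}\cE\to\cF\to 0$ and $0\to\cO\xrightarrow{\alpha\Phi}\cE\to\cF\to 0$, the right-hand vertical arrow being a scalar on $\cF$. The identification of orbits with fibers is then routine unwinding of the equivalence of extensions, absorbing the projective scalar into a homothety of $\cE$.

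For freeness, suppose $\alpha\Phi=\Phi$ for some $\alpha\in\Aut(\cE)$. Then $\alpha$ preserves $\Phi(\cO)\cong\cO$ and restricts to the identity there, while the induced action $\bar\alpha$ on $\cF$ is a scalar $\mu\in\bC^*$ by simplicity. If $\mu\ne 1$, then $\alpha-\mu\cdot\id_\cE$ has image contained in $\Phi(\cO)$ and restricts to $(1-\mu)\cdot\id_{\Phi(\cO)}$, producing a splitting of the non-split extension $0\to\cO\to\cE\to\cF\to 0$; contradiction. So $\mu=1$, whence $\alpha-\id_\cE$ vanishes on $\Phi(\cO)$ and lands in $\Phi(\cO)$, hence factors as $\cE\to\cF\to\cO\hookrightarrow\cE$ with middle arrow in $\Hom(\cF,\cO)=0$ (by stability of $\cF$ and $\mu(\cF)=n/k>0$). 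Thus $\alpha=\id_\cE$.

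The technical heart is the differential step: showing that $d\pi\colon H^0(\cE)\to T_{\pi(\Phi)}L(\cE)$ is surjective with kernel $\End(\cE)\cdot\Phi$. My plan is to analyze the long exact Ext sequences associated to $0\to\cO\to\cE\to\cF\to 0$ (applying $\Hom(\cO,-)$ and $\Hom(-,\cO)$) and to match the cokernel $H^0(\cE)/(\End(\cE)\cdot\Phi)$ with $T_{\pi(\Phi)}L(\cE)$, the latter being accessible from \Cref{thn:leaves} as the image of the Poisson bivector. A Riemann--Roch count ($h^0(\cE)=n$ for the relevant semistable $\cE$ of rank $k+1$ and degree $n$) then matches dimensions on both sides. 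I expect this infinitesimal analysis to be the main obstacle, precisely because the control on $\dim\Aut(\cE)$ and $\dim L(\cE)$ requires carefully invoking the bundle-theoretic input on $\cE$; once in hand, descent for a free action of a connected algebraic group on a smooth variety closes out the geometric-quotient statement.
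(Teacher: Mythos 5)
There is one genuine gap, and it sits exactly where you lean on \Cref{thn:leaves} for more than it delivers. You write that \Cref{thn:leaves} ``already exhibits $L(\cE)$ as a smooth locally closed subvariety of $\bP\Ext^1(\cF,\cO)$''; it does not. Being a symplectic leaf only makes $L(\cE)$ a connected \emph{immersed} (initial) submanifold, and in general symplectic leaves need not be locally closed, let alone algebraic. Without local closedness you cannot even pose the geometric-quotient statement in the category of varieties, and your final appeal to ``descent for a free action \ldots on a smooth variety'' has no variety to descend to. The paper fills this hole with a separate argument (\Cref{th:isloccl}): $L(\cE)=\Psi(\Gamma(\cE)_s)$ is constructible by Chevalley, and it is open in its closure because the fibers of $\Psi$ are all of dimension $\dim\Aut(\cE)$ (this is where freeness is used quantitatively) so Parusi\'nski's equidimensionality criterion applies; smoothness as a variety then comes from the local structure of singular foliations plus GAGA. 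Some version of this step is unavoidable in your plan as well, since your principal-bundle formulation needs the base to exist as a scheme before you can check local triviality.

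Apart from that, your outline is essentially the paper's, just repackaged: your freeness argument is a correct (slightly more hands-on) version of \Cref{le:autactsfree}; your ``fibers are orbits'' step is the paper's orbit-map verification via $\Aut(\cF)\cong\bC^\times$; and your differential step is \Cref{pr:orbmap}, which identifies the image of the Poisson bivector at $\Phi$ with $\Gamma(\cE)/\End(\cE)\Phi$ --- note that the kernel computation is exactly the injectivity of $\psi\mapsto\psi\circ\Phi$, i.e.\ the same fact as freeness, so you should not expect two independent difficulties there. Where the paper then concludes by checking the hypotheses of Borel's criterion for orbit maps (surjective, open, separable, irreducible source, normal target), you propose to prove $\pi$ is a principal $\Aut(\cE)$-bundle; that is a strictly stronger statement and in characteristic zero it does follow from freeness plus submersivity, but it costs you an extra (omitted) argument that the bijection $\Aut(\cE)\times\Gamma(\cE)_s\to\Gamma(\cE)_s\times_{L(\cE)}\Gamma(\cE)_s$ is an isomorphism. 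The Borel route is the cheaper closing move once local closedness and openness of $\Psi$ are in hand.
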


At that point it will have become natural to ask which homological leaves $L(\cE)$ are, in fact, non-empty; \Cref{th:whichmiddle} addresses this.

\begin{theoremN}
  Consider coprime $1\le k<n\ge 3$, $E$ and $\cF$ as in \Cref{thn:leaves}.
  
  The degree-$n$ rank-$(k+1)$ bundles $\cE$ fitting into non-split exact sequences $0\to \cO\to \cE\to \cF\to 0$ are precisely those with determinant $\det\cE=\det\cF$ whose {\it Harder-Narasimhan decomposition} \cite[Proposition 5.4.2]{lepot-vb}
  \begin{equation*}
    \cE\cong \cE_1\oplus \cE_2\oplus \cdots\oplus \cE_s
    ,\quad
    \frac{\deg\c E_1}{\rk \cE_1}
    >
    \cdots
    >
    \frac{\deg\c E_s}{\rk \cE_s}
  \end{equation*}
  is such that the points
  \begin{equation*}
    P_i:=
    (\rk\cE_1+\cdots+\rk \cE_{i},\ \deg\cE_1+\cdots+\deg\cE_{i})
    ,\quad
    1\le i\le s-1    
  \end{equation*}
  lie strictly inside the triangle with vertices $(0,0)$, $(k+1,n)$ and $(k,n)$.  \qedhere 
\end{theoremN}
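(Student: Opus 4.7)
The plan is to handle the two directions of the characterization separately.

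\textbf{Necessity.} Assume a non-split extension $0 \to \cO \to \cE \to \cF \to 0$; then $\det \cE = \det \cF$ is immediate. For the HN polygon condition, concavity of the HN polygon already places interior breakpoints $P_i$ strictly above the chord from $(0,0)$ to $(k+1,n)$ whenever $\cE$ fails to be semistable (and gives a vacuous condition when $\cE$ is semistable), so the substantive content is the upper-boundary inequalities. By monotonicity of the cumulative slopes $d_i/r_i$ this reduces to $\mu_{\max}(\cE) < n/k$, which I would prove by contradiction: if $\cE_1 \subseteq \cE$ is the maximal destabilizing sub-bundle and $\mu(\cE_1) \ge n/k$, the composite $\cE_1 \hookrightarrow \cE \twoheadrightarrow \cF$ cannot vanish---else $\cE_1 \subseteq \cO$ forces the positive-slope $\cE_1$ into a bundle with only non-positive-degree sub-sheaves---and in the nonzero case the image is sandwiched between slopes $\ge \mu(\cE_1) \ge n/k$ (quotient of semistable) and $\le n/k$ (subsheaf of stable), forcing it to equal $\cF$. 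The induced surjection $\cE_1 \twoheadrightarrow \cF$ then identifies $\cE_1$ with $\cF$ (a nontrivial kernel would give $\cE_1 = \cE$ of slope $n/k \ne n/(k+1)$), and the inclusion-projection composite $\cF \cong \cE_1 \hookrightarrow \cE \twoheadrightarrow \cF$ is a nonzero scalar endomorphism of the stable $\cF$, splitting the sequence and contradicting non-splitness.

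\textbf{Sufficiency.} Assume $\cE$ satisfies the numerical data and HN polygon condition. By \Cref{thn:git} it suffices to exhibit a single stable section---i.e., a nowhere-vanishing $\phi: \cO \hookrightarrow \cE$ with (semi)stable cokernel---since the coprimality of $(k,n)$ together with the determinant match then force the cokernel to be isomorphic to $\cF$. The polygon condition yields $0 < \mu_{\min}(\cE) \le \mu_{\max}(\cE) < n/k$, the lower bound coming from $P_{s-1}$ lying strictly below $y = n$ so that $d_{s-1} \le n - 1$ and $\mu_s > 0$. Dualising, $\cE^\vee$ has only negative HN slopes, so $h^0(\cE^\vee) = 0$; Serre duality then gives $h^1(\cE) = 0$, and Riemann-Roch yields $h^0(\cE) = n$. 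Letting $\cE' \subseteq \cE$ denote the saturation of the image of the evaluation $H^0(\cE) \otimes \cO_E \to \cE$, one has $H^0(\cE') = H^0(\cE)$ of dimension $n$, so Riemann-Roch on $\cE'$ forces $\deg \cE' \ge n$ and hence $\mu(\cE') \ge n/\rk\cE'$; combined with $\mu(\cE') \le \mu_{\max}(\cE) < n/k$ this yields $\rk \cE' > k$, so $\cE' = \cE$. A dimension count on the incidence variety $\{([\phi], p) \in \bP H^0(\cE) \times E : \phi(p) = 0\}$ then shows the locus of sections vanishing somewhere has dimension at most $n-k-1 < n-1$, so the generic $\phi$ is nowhere-vanishing.

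The main obstacle is arranging that for some such $\phi$ the cokernel $\cQ_\phi := \cE/\phi(\cO)$ is (semi)stable: a crude HN-pullback estimate does not directly contradict $\cQ_\phi$ being unstable, since a destabilizing subbundle $\cQ' \subset \cQ_\phi$ of slope just above $n/k$ pulls back to a subbundle of $\cE$ of slope $\mu(\cQ')\rk\cQ'/(\rk\cQ'+1)$, still compatible with $\mu_{\max}(\cE) < n/k$. To overcome this I plan to work with $\Hom(\cE, \cF)$, which by Riemann-Roch together with the vanishing $\Hom(\cF, \cE) = 0$ (the latter forced by $\mu_{\max}(\cE) < \mu(\cF)$) has dimension exactly $n$. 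Every surjection $\cE \twoheadrightarrow \cF$ has kernel a degree-zero line bundle, so there is a kernel map from the projectivised surjection space to $\Pic^0(E) \cong E$; the required extension arises from a surjection whose kernel is $\cO$, and I would verify non-emptiness of this preimage either via a dominance/dimension argument ($\dim \bP\Epi(\cE, \cF) \le n-1$ against $\dim \Pic^0(E) = 1$) or by an explicit HN-stratified construction reducing to the semistable case. This final dominance step is the crux of the backward direction.
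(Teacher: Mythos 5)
Your necessity argument is correct, and in fact more self-contained than the paper's: the paper simply invokes the slope form \Cref{def:stabsect}\Cref{item:def:stabsect-slopes} of section-stability, whereas you re-derive the key inequality $\mu_{\max}(\cE)<n/k$ from non-splitness via the maximal destabilizing subbundle; that argument goes through. The problems are all in the sufficiency direction. A first, local issue: your global-generation step is broken. Riemann--Roch gives $\deg\cE'=h^0(\cE')-h^1(\cE')=n-h^1(\cE')\le n$, not $\deg\cE'\ge n$, and the conclusion itself is false in general --- for $k=1$ the bundle $\cE=\cL\oplus\cO(p)$ with $\deg\cL=n-1$ satisfies the polygon condition but is not globally generated at $p$. (The generic section of such an $\cE$ is still nowhere vanishing; the paper gets this by bounding the closed locus $\bigcup_{z\in E}\Gamma(\cE(-z))$ directly, with no appeal to global generation.)

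The genuine gap is the step you yourself call ``the crux'': producing one nowhere-vanishing section with stable cokernel. Your proposed dimension count ($\dim\bP\Epi(\cE,\cF)\le n-1$ against $\dim\Pic^0(E)=1$) establishes neither that $\Epi(\cE,\cF)$ is non-empty nor that the kernel map $\bP\Epi(\cE,\cF)\to\Pic^0(E)$ is dominant --- a priori every surjection $\cE\twoheadrightarrow\cF$ could have kernel a fixed non-trivial degree-zero line bundle --- so the preimage of $[\cO]$ is not shown to be non-empty, and the alternative ``HN-stratified construction'' is not carried out. The paper closes exactly this gap by an exhaustion/counting argument: the nonzero sections with torsion-free cokernel fall into finitely many strata $\Gamma(\cE)_\nu$ indexed by the Harder--Narasimhan type $\nu$ of the cokernel, and for each decomposable type (with $s\ge 2$ summands) one bounds $\dim\Gamma(\cE)_\nu$ by the $s$-dimensional moduli of type-$\nu$ quotients plus $\dim\Epi(\cE,\cF')$ (which is independent of $\nu$) minus $\dim\Aut(\cF')\ge s+1$ (the extra dimensions coming from \Cref{le:morbetslopes}); for the indecomposable type the corresponding deficit is only $1$. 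Hence the decomposable strata have dimension strictly less than $\dim\Gamma(\cE)=n$, and since $\Gamma(\cE)$ is irreducible the stratum with indecomposable --- hence, by coprimality of $(k,n)$, stable --- cokernel is non-empty. Some argument of this kind is what your outline is missing; without it the backward implication is not proved.
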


\subsection*{Acknowledgements}

I am grateful for numerous illuminating comments from R. Kanda, M. Matviichuk, S. P. Smith and B. Pym. 

The work was partially supported by NSF grant DMS-2001128. 


\section{Homological and symplectic leaves}\label{se:ps}

The phrase {\it vector bundle} is the customary one in algebraic geometry (\cite[Exercise II.5.18]{hrt}, \cite[\S B.3]{Fulton-2nd-ed-98}), and we will frequently make the passage between these and their corresponding (finite-rank) locally free sheaves seamlessly and tacitly. 

The present discussion uses the conventions of \cite{pl98}, but we use the symbols `$n$' for degrees and `$k$' for ranks, so as to be in agreement with the notation of \cite{FO98}. Having fixed degrees $n_i$ and ranks $k_i$ for $i=1,2$, Polishchuk's moduli space $M$ (denoted there by $M_\sigma(n_1,n_2,k_1,k_2)$) parametrizes the morphisms $\Phi:\cK\to \cE$ between bundles of degrees $n_i$ and ranks $k_i$ respectively, satisfying an appropriate stability condition depending on the numbers $\sigma$ or $\tau$. We also write `$\cF$' for the $E$s of \cite{pl98} (and `$\cF_i$' for `$E_i$', etc.) so as to avoid notational clashes with \cite{00-leaves_xv3}.

Now fix a stable vector bundle $\cF$ of degree $n$ and rank $k$ (playing the roles of the $\xi_{n,k}$ in \cite[Introduction]{FO98}). As explained in \cite[$\S$3]{pl98}, since the moduli space $\bP^{n-1}\cong \bP\Ext^1(\cF,\cO)$ considered in \cite{FO98} consists of extensions $0 \to \cO \to \cE \to \cF \to 0$, it can be identified with part of the moduli space $M=M_\sigma(n,0,k+1,1)$.  Specifically $\bP^{n-1}$ parametrizes embeddings $\cO \to \cE$ into rank-$(k+1)$ bundles $\cE$ whose determinant is equal to $\det(\cF)$. This differs from $M$ in that we have fixed the determinants of $\cK$ and $\cE$, and hence have two fewer degrees of freedom.

By \cite[Lemma 3.1]{pl98}, the tangent space to $M$ at a point $\cK\xrightarrow{\Phi} \cE$ can be identified with $H^1(\underline{\End}(\cE,\cK))$, where $\underline{\End}(\cE,\cK)$ is the sheaf of local endomorphisms of $\cE$ that preserve the subbundle $\cK$; i.e., it is the kernel in the sequence
\begin{equation*}
  0 \to \underline{\End}(\cE,\cK) \to \underline{\End}(\cE)  \to  \underline{\Hom}(\cK,\cE/\cK) \to 0.
\end{equation*}

\Cref{le:n+1} confirms the aforementioned numerical intuition of dropping two degrees of freedom. Since in the sequel we will have to refer to the notion of {\it stability} of \cite[p.691]{pl98} for a section $\Phi\in \Gamma(\cE)$ of a vector bundle on $E$, we recall it here.

\begin{definition}\label{def:stabsect}
  Let $\cE$ be a vector bundle of rank $\ge 2$ on the elliptic curve $E$. A section $\cO\xrightarrow{\Phi} \cE$ is {\it stable} if the two following mutually-equivalent (sets of) conditions hold:
  \begin{enumerate}[(a),wide=0pt]

  \item\label{item:def:stabsect-stabquot} $\Phi$ is non-zero (and hence an embedding), non-split, and the quotient $\cE/\Phi(\cO)$ is a stable bundle (so in particular torsion-free).

  \item\label{item:def:stabsect-slopes} Every proper non-zero subbundle $\cE_1\le \cE$ has {\it slope} \cite[Definition 10.20]{muk-invmod}
    \begin{equation}\label{eq:def:stabsect-subb}
      \frac{\deg\cE'}{\rk\cE_1} =: \mu(\cE') < \frac{\deg\cE}{\rk\cE-1},
    \end{equation}
    and also
    \begin{equation}\label{eq:def:stabsect-quot}
      \mu\left(\cE/\cE_1\right) > \frac{\deg\cE}{\rk\cE-1}
    \end{equation}
    for strictly intermediate subbundles $\Phi(\cO)\lneq \cE'\lneq \cE$.
  \end{enumerate}
  We then also refer to the resulting extension
  \begin{equation}\label{eq:origphi}
    \xi
    \quad : \quad
    0\to \cO\xrightarrow{\ \Phi\ } \cE\xrightarrow{\quad} \cF\to 0
  \end{equation}
  as {\it stable}.
\end{definition}

\begin{convention}\label{cv:posdeg}
  We only ever discuss the notions relevant to \Cref{def:stabsect} under the assumption that $\deg(\cE)=\deg(\cE/\Phi(\cO))$ is positive.
  
  The only other possibility, if there are to be any stable sections at all, is for $\cE$ to be the unique extension of $\cO$ by $\cO$ (the bundle $F_2$ of \cite[Theorem 5]{Atiyah}).

  An opportunistic piece of notation, on the subject of such iterated extensions of $\cO$: we write $\tensor[_n]{\cO}{}$ for the one of rank $n$ (i.e. $F_n$ in \cite[Theorem 5]{Atiyah}). 
\end{convention}

\begin{remarks}\label{res:nowh0}
  \begin{enumerate}[(1), wide=0pt]

  \item To verify that \Cref{item:def:stabsect-stabquot} and \Cref{item:def:stabsect-slopes} in \Cref{def:stabsect} are indeed equivalent one considers subbundles $\cE'\le \cE$. These fall into two classes: those containing $\cO\lhook\joinrel\xrightarrow{\Phi}\cE$ and the others, etc.
    
  \item Modulo the usual \cite[Exercise II.5.18]{hrt} sheaf-to-vector-bundle correspondence, the sheaf-language requirement in \Cref{def:stabsect}\Cref{item:def:stabsect-stabquot} that $\cE/\Phi(\cO)$ be a torsion-free sheaf is precisely the bundle-theoretic constraint of \cite[p.691]{pl98} that $\Phi$ be nowhere vanishing.
    
  \item\label{item:genslope} Slopes make sense \cite[Definition 4.6]{BB-vb} for arbitrary coherent sheaves on arbitrary smooth projective curves such as $E$: ranks can be made sense of any number of ways (\cite[Definition 4.4]{BB-vb} or \cite[Exercise II.6.12]{hrt}, say), and one can take the {\it Euler characteristic} (\cite[Definition 4.3]{BB-vb}, \cite[Exercise III.5.1]{hrt})
    \begin{equation*}
      \chi(\cF):=\dim H^0(\cF)-\dim H^1(\cF)
    \end{equation*}
    of $\cF$ as a stand-in for the degree. With that in mind, there is no loss or harm in dropping the local freeness constraint on $\cE'$ in the conditions of \Cref{def:stabsect}\Cref{item:def:stabsect-slopes}.

  \item Degrees also make sense \cite[Exercise II.6.12]{hrt} in the generality of point \Cref{item:genslope}, and on elliptic curves specialize back to Euler characteristics (e.g. by the \cite[Riemann-Roch Formula 10.10]{muk-invmod} for bundles and the additivity of both degrees \cite[Exercise II.6.12]{hrt} and Euler characteristics \cite[Exercise III.5.1]{hrt}).

  \item When $\cF$ is a {\it line} bundle of degree $n\ge 3$, the conditions imposed on the bundle $\cE$ of \cite[\S 3.1]{00-leaves_xv3} are precisely \cite[Proposition 3.4]{00-leaves_xv3} equivalent to the requirement that it fit into a stable extension \Cref{eq:origphi}. 
  \end{enumerate}
\end{remarks}

We single out the following simple remark, as it would in any case otherwise be implicit in much of the ensuing discussion.

\begin{lemma}\label{le:openlocus}
  Having fixed the rank-$(k+1)$ bundle $\cE$, the sections $\cO\xrightarrow{\Phi} \cE$ stable in the sense of \Cref{def:stabsect} constitute an open subspace $\Gamma(\cE)_s\subset \Gamma(\cE)$.
\end{lemma}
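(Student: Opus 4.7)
I would work with the equivalent characterization \Cref{def:stabsect}\Cref{item:def:stabsect-stabquot}, aiming to exhibit $\Gamma(\cE)_s$ as an intersection of three separately open loci in $\Gamma(\cE)$. Write $\mu_0 := \deg\cE/(\rk\cE-1)$ throughout. The first observation is that the clause ``every proper non-zero subbundle of $\cE$ has slope $<\mu_0$'' --- part of the equivalent formulation \Cref{item:def:stabsect-slopes} --- is a condition on $\cE$ alone. If it fails then $\Gamma(\cE)_s=\emptyset$ and the lemma is trivial; so I would assume it holds.

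Under this assumption $\cE$ cannot admit $\cO$ as a direct summand: any such complement would be a proper subbundle of rank $\rk\cE-1$ and degree $\deg\cE$, of slope exactly $\mu_0$, contradicting strictness. Consequently no short exact sequence $0\to \cO\to \cE\to \cE/\Phi(\cO)\to 0$ with $\cE/\Phi(\cO)$ locally free can split, so the ``non-split'' clause of \Cref{item:def:stabsect-stabquot} is automatic in our regime. What remains is to check openness of (i) non-vanishing of $\Phi$ (equivalently, torsion-freeness of $\cE/\Phi(\cO)$) and (ii) stability of the cokernel $\cE/\Phi(\cO)$.

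For (i) I would run the standard properness argument: the zero scheme $Z\subseteq \Gamma(\cE)\times E$ of the universal section of $p_E^*\cE$ is closed, and since $E$ is proper the projection $Z\to \Gamma(\cE)$ is closed, so the non-vanishing locus is the open complement $\Gamma(\cE)\setminus p_1(Z)$. For (ii) I would restrict to this open locus $\Gamma(\cE)^\circ$, where the cokernel of the universal section is automatically locally free of rank $\rk\cE-1$ and degree $\deg\cE$, yielding a flat family on $\Gamma(\cE)^\circ\times E$ with fibres of constant slope $\mu_0$. The openness of the stability stratum on the base of such a flat family of sheaves of fixed Hilbert polynomial on a smooth projective curve is then the key external input --- and the main technical hurdle of the proof, although by now a thoroughly standard piece of moduli theory which I would cite rather than reprove.
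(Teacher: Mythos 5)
Your proof is correct and follows essentially the same route as the paper's: both reduce the statement to the openness of the nowhere-vanishing locus together with the openness of stability for the resulting flat family of cokernels, the latter being exactly the external input the paper supplies via Maruyama's openness theorem. You are in fact somewhat more careful than the paper's two-line argument, explicitly disposing of the ``non-split'' clause and of the degenerate case where $\cE$ admits a proper subbundle of slope at least $\deg\cE/(\rk\cE-1)$, both of which the paper's characterization of stable sections as ``non-zero sections with stable quotient'' glosses over.
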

\begin{proof}
  Said sections are precisely those non-zero sections for which the resulting quotient $\cE/\cO$ is stable (for the stable bundles on $E$ are uniquely determined by their ranks and determinants \cite[Theorem 10]{Atiyah} and \cite[Appendix A, Fact]{tu}), and the openness claim follows from the openness of the stability condition \cite[Theorem 2.8(B)]{maruy_open}.
\end{proof}

\Cref{le:openlocus} implies in particular that much as expected, if there {\it are} any stable sections at all (for a given $\cE$), ``most'' sections are stable. 

\begin{lemma}\label{le:n+1}
  Let $\cO\xrightarrow{\Phi} \cE$ be a stable embedding in the sense of \Cref{def:stabsect}, so that
  \begin{equation*}
    \cF:=\cE/\cO=\cE/\Phi(\cO)
  \end{equation*}
  is a stable bundle.
  
  The tangent space $T=H^1(\underline{\End}(\cE,\cO))$ to the moduli space $M$ at $\Phi$ has dimension $n+1$.
\end{lemma}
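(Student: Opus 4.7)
The plan is to feed the short exact sequence
$$0\to \underline{\End}(\cE,\cO)\to \underline{\End}(\cE)\to \underline{\Hom}(\cO,\cF)\to 0$$
displayed just before the statement (with $\cF:=\cE/\Phi(\cO)$, under the natural identification $\underline{\Hom}(\cO,\cF)\cong \cF$) through its long exact cohomology sequence and extract $h^1(\underline{\End}(\cE,\cO))$ via an Euler-characteristic count.

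The cohomology of the middle term drops out immediately: $\underline{\End}(\cE)$ has degree $0$, is self-dual, and Serre duality on $E$ (canonical trivial) gives $h^1(\underline{\End}(\cE))=h^0(\underline{\End}(\cE))$, so the two cancel in the alternating sum. For the quotient term, stability of $\cF$ combined with $\deg\cF=n>0$ forces $h^1(\cF)=0$ by the standard slope argument applied to $\cF^\vee$ (a nonzero section of $\cF^\vee$ would embed $\cO$, of slope $0$, into the stable bundle $\cF^\vee$ of slope $-n/k<0$), whence Riemann-Roch gives $h^0(\cF)=n$. The long exact sequence therefore collapses to
$$h^1(\underline{\End}(\cE,\cO))=h^0(\underline{\End}(\cE,\cO))+n,$$
and the entire task reduces to proving $h^0(\underline{\End}(\cE,\cO))=1$.

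The dimension one is clearly attained by $\bC\cdot\id_{\cE}$. Conversely, any global section $\phi$ of $\underline{\End}(\cE,\cO)$ restricts to a scalar $\lambda\cdot\id_{\cO}$ (since $\End\cO=\bC$) and induces a scalar $\mu\cdot\id_{\cF}$ on the stable quotient (since $\End\cF=\bC$); the correction $\phi-\lambda\cdot\id_{\cE}$ then vanishes on $\cO$ and induces $(\mu-\lambda)\cdot\id_{\cF}$, so it corresponds to an element of $\Hom(\cF,\cE)$ whose composition with the projection $\cE\twoheadrightarrow\cF$ is $(\mu-\lambda)\cdot\id_{\cF}$. The plan is then to kill $\Hom(\cF,\cE)$ outright, via the long exact sequence
$$0\to\Hom(\cF,\cO)\to\Hom(\cF,\cE)\to\Hom(\cF,\cF)\xrightarrow{\delta}\Ext^1(\cF,\cO),$$
using two inputs: first, $\Hom(\cF,\cO)=0$ by slope comparison (any nonzero quotient of the stable $\cF$ has slope $>0$, while any torsion-free subsheaf of $\cO$ has slope $\leq 0$); second, $\delta(\id_{\cF})$ is precisely the class of the extension $\xi$ of \Cref{eq:origphi}, which is nonzero by the non-splitness built into \Cref{def:stabsect}. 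Together these give $\Hom(\cF,\cE)=0$, hence $\mu=\lambda$ and $\phi\in\bC\cdot\id_{\cE}$.

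The step where the hypothesis actually bites is the identification $\delta(\id_{\cF})=[\xi]\neq 0$: every other ingredient is formal Riemann-Roch, Serre duality, and the fact that stable bundles on $E$ have scalar endomorphism algebras, so it is only at this point that non-splitness of the stable embedding $\Phi$ is invoked. I anticipate no genuine obstacle beyond taking some care to match the cleanness of the slope inequalities with \Cref{cv:posdeg}.
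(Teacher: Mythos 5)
Your proof is correct and follows essentially the same route as the paper: the long exact cohomology sequence of $0\to \underline{\End}(\cE,\cO)\to \underline{\End}(\cE)\to \cF\to 0$, cancellation of $h^0$ and $h^1$ of the self-dual $\underline{\End}(\cE)$, the identification $h^0(\cF)=n$, and the reduction to $\End(\cE,\cO)=\Bbbk$. The only difference is that you spell out the last step (via $\Hom(\cF,\cE)=0$, using $\delta(\id_\cF)=[\xi]\neq 0$), which the paper merely asserts as a consequence of stability of $\cF$ and non-splitness.
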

\begin{proof}
  Consider the defining exact sequence
  \begin{equation}\label{eq:def}
    0\to
    \underline{\End}(\cE,\cO)
    \to
    \underline{\End}(\cE)
    \to
    \cF
    \to 0
  \end{equation}
  of $\underline{\End}(\cE,\cO)$. We then have the resulting exact sequence of cohomology spaces, taking the following form:
  \begin{equation}\label{eq:seq}
    0\to \End(\cE)/\End(\cE,\cO)\to \Gamma(\cF)\to T\to H^1(\underline{\End}(\cE))\to 0,
  \end{equation}
  where non-underlined $\End$s and $\Hom$s signify vector spaces as opposed to sheaves of $\cO_E$-modules. We will denote the non-zero terms of this sequence by $S_1$ up to $S_4$, the indices increasing rightward.

  Now, since $\underline{\End}(\cE)\cong \cE\otimes\cE^*$ is self-dual, its $1^{st}$ and $0^{th}$ cohomology spaces have the same dimension. On the other hand, the stability of $\cF$ and the fact that the extension
  \begin{equation}\label{eq:ext}
    0\to \cO\to \cE\to \cF\to 0
  \end{equation}
  does not split shows that
  \begin{equation}\label{eq:endfoscalar}
    \End(\cE,\cO) = \text{scalars}\cong \Bbbk
  \end{equation}
  In conclusion, we have
  \begin{equation*}
    \dim(S_1)-\dim(S_4) = -1. 
  \end{equation*}
  This, together with
  \begin{equation*}
    \dim(S_2) = \dim(\Gamma(\cF)) = n
  \end{equation*}
  and
  \begin{equation*}
    \sum_{i=1}^4(-1)^i\dim(S_i)=0
  \end{equation*}
  ensures that we indeed have $\dim(T)=\dim(S_3)=n+1$, as claimed.
\end{proof}
 
In the context of \Cref{le:n+1}, we will now describe the tangent space to $\bP^{n-1}\cong \bP\Ext^1(\cF,\cO)$ at $\Phi:\cO\to \cE$ as a particular 
$(n-1)$-dimensional subspace of the $(n+1)$-dimensional tangent space $H^1(\underline{\End}(\cE,\cO))$.

There is an epimorphism
\begin{equation}\label{eq:epi2}
  \underline{\End}(\cE,\cO)\to \underline{\End}(\cF) \oplus\underline{\End}(\cO)
\end{equation}
obtained by inducing endomorphisms on the subbundle $\cO$ and the quotient bundle $\cF$ of $\cE$. Because $\cF$ is stable, the right-hand side of \Cref{eq:epi2} has the same $1^{st}$ cohomology as $\cO^{\oplus 2}$:
\begin{itemize}
\item Being stable, $\cF$ is \cite[Lemma 12]{tu} indecomposable and hence (\cite[Theorem 10]{Atiyah}, \cite[Proposition 14]{tu}) of the form
  \begin{equation*}
    \cF\cong \cF(k,n)\otimes \cL
    ,\quad
    k:=\rk\cF,\quad n:=\deg \cF
  \end{equation*}
  for Atiyah's canonical indecomposable bundles $\cF(-,-)$ of \cite[p.1]{tu} and a degree-0 line bundle $\cL$.

\item The degree $n$ and rank $r$ are furthermore \cite[Lemma 30]{tu} coprime. 
  
\item Whence
  \begin{equation}\label{eq:sumofls}
    \begin{aligned}
      \underline{\End}(\cF)
      \cong
      \cF\otimes \cF^*
      &\cong
        \cF(k,n)\otimes \cF(k,n)^*\\
      &\cong \bigoplus \cL
        \quad\text{by \cite[Lemma 22]{Atiyah}},
    \end{aligned}    
  \end{equation}
  the sum ranging over the $k^2$ mutually-non-isomorphic degree-0 line bundles $\cL$ with $\cL^{\otimes r}\cong \cO$.

\item We have
  \begin{equation*}
    H^0(\cL)\cong \{0\}\cong H^1(\cL)
  \end{equation*}
  for all $\cL$ in \Cref{eq:sumofls} except for the single choice $\cL=\cO$, hence the claim. 
\end{itemize}
The last non-zero map in the long exact sequence associated to the cokernel \Cref{eq:epi2} is thus a surjection
\begin{equation}\label{eq:surj-h1}
  H^1(\underline{\End}(\cE,\cO))
  \xrightarrowdbl{\quad}
  H^1(\cO^{\oplus 2})\cong \Bbbk^2. 
\end{equation}

\begin{lemma}\label{le:ker}
  Let $\cO\xrightarrow{\ \Phi\ } \cE$ be a point of the moduli space $M=M_\sigma(n,0,k+1,1)$ (as in \Cref{le:n+1}). The tangent space to $\bP^{n-1}\cong \bP(\Ext^1(\cE/\cO,\cO))$ at $\Phi$ is the kernel of the surjection \Cref{eq:surj-h1}.
\end{lemma}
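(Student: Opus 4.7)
The plan is to exhibit the tangent space to $\bP^{n-1}$ at $[\xi]$ as contained in the kernel of \Cref{eq:surj-h1} and then match dimensions. Both sides are $n-1$-dimensional: on one hand, $\bP\Ext^1(\cF,\cO)$ has tangent space $\Ext^1(\cF,\cO)/\Bbbk\xi$ at $[\xi]$, and $\dim\Ext^1(\cF,\cO)=\dim H^0(\cF)=n$ by Serre duality and Riemann--Roch; on the other, the kernel of \Cref{eq:surj-h1} has dimension $(n+1)-2=n-1$ by \Cref{le:n+1} and the surjectivity of \Cref{eq:surj-h1} itself.

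To produce the inclusion, I would use the short exact sequence of sheaves
\begin{equation*}
  0\to \underline{\Hom}(\cF,\cO)\to \underline{\End}(\cE,\cO)\to \underline{\End}(\cF)\oplus \underline{\End}(\cO)\to 0,
\end{equation*}
whose right-hand map is \Cref{eq:epi2} and whose kernel consists of those local endomorphisms of $\cE$ factoring as $\cE\twoheadrightarrow \cF\to \cO\hookrightarrow \cE$. The associated long exact sequence, combined with \Cref{eq:endfoscalar}, the equality $\End(\cF)=\Bbbk$ coming from the stability of $\cF$, and \Cref{eq:sumofls}, reads
\begin{equation*}
  \Bbbk\xrightarrow{\mathrm{diag}}\Bbbk\oplus\Bbbk\xrightarrow{\ \delta\ }\Ext^1(\cF,\cO)\to H^1(\underline{\End}(\cE,\cO))\to \Bbbk^2\to 0,
\end{equation*}
the rightmost nonzero arrow being \Cref{eq:surj-h1}. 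The connecting $\delta$ sends $(\id_\cF,0)$ and $(0,\id_\cO)$ to $\pm\xi$: these are the Yoneda obstructions to lifting $\id_\cF$ (resp.\ $\id_\cO$) to an endomorphism of $\cE$ that preserves $\cO$ and vanishes on the complementary factor, and it is standard that such an obstruction is precisely the extension class. Hence $\im\delta=\Bbbk\xi$, producing an injection $\Ext^1(\cF,\cO)/\Bbbk\xi\hookrightarrow H^1(\underline{\End}(\cE,\cO))$ with image in the kernel of \Cref{eq:surj-h1}; by the dimension tally this is an equality.

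The remaining ingredient is the deformation-theoretic assertion that this sheaf-theoretic injection \emph{is} the tangent map of the inclusion $\bP^{n-1}\hookrightarrow M$ at $[\xi]$: a first-order variation of the extension class $\xi$ corresponds, via $\underline{\Hom}(\cF,\cO)\hookrightarrow \underline{\End}(\cE,\cO)$ on $H^1$, to the induced first-order variation of the embedding $\cO\hookrightarrow \cE$. This is the step I expect to require the most care; although standard, reconciling it with Polishchuk's identification $T_\Phi M=H^1(\underline{\End}(\cE,\cO))$ from \cite[Lemma 3.1]{pl98} asks for a careful comparison of conventions. An alternative reads \Cref{eq:surj-h1} as the differential of the determinant map $M\to \Pic^0(E)\times\Pic^n(E)$, $(\cK\xrightarrow{\Phi'}\cE')\mapsto (\cK,\det\cE')$, whose fiber through $\Phi$ is exactly $\bP^{n-1}$; the price is matching the isomorphism $H^1(\underline{\End}(\cF))\cong H^1(\cO)$ extracted from \Cref{eq:sumofls} with the one induced by the trace, which in characteristic zero isolates the distinguished $\cO$ summand of $\underline{\End}(\cF)$.
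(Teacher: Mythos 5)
Your proposal is correct in substance but its primary route differs from the paper's. The paper's proof is short and purely moduli-theoretic: it observes that the two $\Bbbk$ summands in the codomain of \Cref{eq:surj-h1} are the tangent spaces to the moduli spaces of $\cO$ and of stable charge-$(k,n)$ bundles, that the two components of \Cref{eq:surj-h1} are the differentials of the maps $\Phi\mapsto\cK$ and $\Phi\mapsto\cE/\cK$, and that $\bP^{n-1}$ is precisely a fiber of the map integrating \Cref{eq:surj-h1} --- which is exactly the ``alternative'' you sketch in your closing sentence. Your main argument instead runs the long exact cohomology sequence of \Cref{eq:mid} with $\cK=\cO$, identifies $\im\delta=\Bbbk\xi$ via the Yoneda description of the connecting map, and concludes by exactness together with a dimension count; this is essentially the computation the paper performs in the paragraph \emph{following} \Cref{le:ker}, where the kernel is identified with $H^1(\cF^*)$ modulo a one-dimensional subspace. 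Your route therefore buys a concrete, canonical description of the kernel as $\Ext^1(\cF,\cO)/\Bbbk\xi$, but note that by itself it only shows the kernel is \emph{abstractly} this quotient: to prove the lemma you must still check that the resulting isomorphism agrees with the differential of the inclusion $\bP^{n-1}\hookrightarrow M$ under Polishchuk's identification $T_\Phi M=H^1(\underline{\End}(\cE,\cO))$. You correctly flag this as the delicate step; the paper discharges it by the fiber argument rather than by a comparison of extension-class conventions, and folding that argument in (as you propose in your last sentence) is the cleanest way to close the proof. Two very minor points: exactness of the long exact sequence already gives that the image of $\Ext^1(\cF,\cO)\to H^1(\underline{\End}(\cE,\cO))$ \emph{equals} the kernel of \Cref{eq:surj-h1}, so the final dimension tally is not needed for that step; and the identification of the lone nonvanishing summand of $H^1(\underline{\End}(\cF))$ with $H^1(\cO)$ via the trace, which you raise as a possible wrinkle, is harmless since any nonzero scalar multiple of the map gives the same kernel.
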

\begin{proof}
  The two $\Bbbk$ summands of the codomain of \Cref{eq:surj-h1} are the tangent spaces $H^1(\underline{\End}(\cO))$ and $H^1(\underline{\End}(\cF))$ at $\cO$ and $\cF$ of the respective moduli spaces of stable bundles, and each component of the map in \Cref{eq:surj-h1} is the map of tangent spaces obtained by differentiating the morphisms of moduli spaces associating $\cK$ and $\cE/\cK$ respectively to $\Phi:\cK\to \cE$.
4~
  The conclusion follows from the fact that the smaller moduli space $\bP^{n-1}$ that we are interested in is obtained from $M$ by fixing the images of the map of moduli spaces that integrates \Cref{eq:surj-h1}.
\end{proof}

\begin{notation}\label{not:eee}
  Whenever we discuss bundles $\cK\subseteq \cE$ as above, $\cF$ denotes $\cE/\cK$ unless specified otherwise. 
\end{notation}

The kernel from the statement of \Cref{le:ker} is amenable to further explication as follows. First, observe that the sheaf epimorphism \Cref{eq:epi2} fits in general into an exact sequence
\begin{equation}\label{eq:mid}
  0\to \underline{\Hom}(\cF,\cK) \to \underline{\End}(\cE,\cK)\to \underline{\End}(\cF)\oplus\underline{\End}(\cK)\to 0.
\end{equation}
When $\cK:=\cO$ as in \Cref{le:n+1}, the long exact sequence of cohomology groups implies that the kernel in question can be identified with the quotient of the $n$-dimensional space $H^1(\cF^*)\cong H^0(\cF)^*$ by the one-dimensional space
\begin{equation*}
  (\End(\cF)\oplus \End(\cO))/\End(\cE,\cO)\cong (\Bbbk\oplus \Bbbk)/\Bbbk. 
\end{equation*}

The discussion above makes it natural to ask how the Poisson structure on $M$ induces one on the codimension-two submanifold $\bP^{n-1}\subset M$. This is easily determined by examining the Poisson structure on the larger manifold as described on \cite[pp. 689-690]{pl98} (recast in the present setting of $\cK=\cO$, $\cF=\cE/\cO$). 

\begin{construction}\label{con:poissonmap}
  One possible description of the map giving $\bP^{n-1}\cong \bP \Ext^1(\cE/\cO,\cO)$ its Poisson structure is as follows.

  \begin{enumerate}[(1)]
  \item\label{item:1} Consider the global-section map
    \begin{equation}\label{eq:eoast2e}
      \Gamma(\underline{\End}(\cE,\cO)^*)\to \Gamma(\cF)
    \end{equation}
    attached (with $\cK:=\cO$) to the canonical morphism
    \begin{equation}\label{eq:eoast2e-pre}
      \underline{\End}(\cE,\cO)^*
      \cong
      \underline{\End}(\cE,\cK)^*
      \to
      \underline{\Hom}(\cE/\cK,\cK)^*
      \cong
      \underline{\Hom}(\cF,\cO)^*
      \cong
      \cF
    \end{equation}
    dual to the injection
    \begin{equation*}
      \underline{\Hom}(\cE/\cK,\cK)\to \underline{\End}(\cE,\cK)
    \end{equation*}
    appearing as the leftmost (non-zero) arrow in \Cref{eq:mid}.

  \item\label{item:2} Compose \Cref{eq:eoast2e} with the connecting map
    \begin{equation}\label{eq:ee1o}
      \Gamma(\cF)\to H^1(\underline{\End}(\cE,\cO))
    \end{equation}
    arising from the exact sequence \Cref{eq:def}.
  \item\label{item:3} Precompose the resulting map with the Serre duality isomorphism
    \begin{equation}\label{eq:serreiso}
      H^1(\underline{\End}(\cE,\cO))^*\cong \Gamma(\underline{\End}(\cE,\cO)^*).
    \end{equation}  
  \end{enumerate}  
\end{construction}

This provides a map $T^*\to T$ for the tangent space
\begin{equation*}
  T=H^1(\underline{\End}(\cE,\cO))
\end{equation*}
which turns out to be the Poisson bivector on $M$.

Because the $1^{st}$ cohomology of the map dual to \Cref{eq:eoast2e-pre} automatically lands in the codimension-two subspace described in \Cref{le:ker}, the Poisson structure factors as
\begin{equation*}
  T^*
  \xrightarrowdbl{\quad}
  S^*
  \xrightarrow{\quad}
  T,
\end{equation*}
where $S\subset T$ is the tangent space to the smaller moduli space $\bP^{n-1}$. The skew symmetry then implies the further factorization
\begin{equation}\label{eq:tsst}
  T^*
  \xrightarrowdbl{\quad}
  S^*
  \xrightarrow{\quad}
  S
  \lhook\joinrel\xrightarrow{\quad}
  T,
\end{equation}
and hence the original Poisson structure on the $(n+1)$-dimensional moduli space $M$ induces one on $\bP^{n-1}$. We cast the target result as the following paraphrase of the above discussion.

\begin{lemma}\label{le:fib}
  In the setting of \Cref{le:n+1} the Poisson structure on $M$ induces one on $\bP^{n-1}\cong \bP\Ext^1(\cF,\cO)$.

  In particular, the symplectic leaves through points of the latter submanifold with respect to the original Poisson structure are contained in $\bP^{n-1}$.
\end{lemma}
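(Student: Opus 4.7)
The plan is to extract both claims of the lemma directly from the factorization \Cref{eq:tsst} set up above, which already does the main geometric work.

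First I would re-interpret that factorization as the statement that $\bP^{n-1}\subset M$ is a \emph{Poisson submanifold}: at every $\Phi\in\bP^{n-1}$, the Poisson bivector $\pi_M$ of \Cref{con:poissonmap} has $\mathrm{image}(\pi_M^\sharp)\subseteq S=T_\Phi\bP^{n-1}$, so $\pi_M$ is a section of $\Lambda^2 T\bP^{n-1}$ along $\bP^{n-1}$. The containment $S\hookrightarrow T$ supplied by (eq:tsst) is exactly what one needs.

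Second I would invoke the standard principle that a Poisson submanifold inherits a Poisson structure. Concretely: for functions $f,g$ on $\bP^{n-1}$ and any local extensions $\tilde f,\tilde g$ to $M$, the value $\{\tilde f,\tilde g\}_M$ along $\bP^{n-1}$ is independent of the choice of extensions, because $\pi_M^\sharp(d\tilde g)$ is tangent to $\bP^{n-1}$ and therefore annihilates the conormal directions into which any admissible change of $\tilde f$ falls. This yields a skew bracket on functions of $\bP^{n-1}$, and the Jacobi identity descends from $M$ either via the same extension trick or via the Schouten-bracket equality $[\pi,\pi]=[\pi_M,\pi_M]|_{\bP^{n-1}}=0$.

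Third, for the symplectic-leaf statement, I would invoke the definition of a leaf as the maximal immersed integral manifold of the characteristic (generalized) distribution $\Phi\mapsto\mathrm{image}(\pi_M^\sharp|_\Phi)\subset T_\Phi M$. Since this distribution is tangent to $\bP^{n-1}$ at every $\Phi\in\bP^{n-1}$, every leaf through such a point must remain inside $\bP^{n-1}$. The only potentially delicate point is confirming that \Cref{con:poissonmap}, which we have read pointwise at $\Phi$, genuinely defines an algebraic bivector field on all of $M$; this is automatic because each ingredient---the sheaf morphism \Cref{eq:eoast2e-pre}, the coboundary \Cref{eq:ee1o}, and the Serre duality \Cref{eq:serreiso}---varies algebraically in $\Phi$. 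Given this, no serious obstacle remains, and the argument is essentially a bookkeeping unpacking of \Cref{eq:tsst}.
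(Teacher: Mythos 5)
Your proposal is correct and follows essentially the same route as the paper: the paper's ``proof'' of this lemma is explicitly just a paraphrase of the preceding discussion, which establishes the factorization \Cref{eq:tsst} (image of the bivector lands in $S^*$ by the cohomological structure of the map, hence in $S$ by skew-symmetry) and concludes that $\bP^{n-1}$ is a Poisson submanifold whose leaves, being integral manifolds of the characteristic distribution, stay inside it. You merely make explicit the standard function-extension argument for the inherited bracket, which the paper leaves implicit.
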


\begin{remark}
  We noted above that we have a natural map from the moduli space $M$ to the product of moduli spaces of line bundles and stable rank-$k$ degree-$n$ bundles. \Cref{le:fib} applies to all fibers of this map, i.e. the symplectic leaves of the Poisson structure on $M$ are ``vertical'' (they are contained in fibers of this map).
\end{remark}

Now let $\Phi:\cO\to \cE$ be a point in the moduli space $\bP=\bP\Ext^1(\cF,\cO)$, as discussed above. The tangent space to the symplectic leaf through $\Phi$ at $\Phi$ is precisely the image of the Poisson structure map $S^*\to S$, where $S=T_{\Phi}\bP$ is the tangent space to the moduli space at $\Phi$.

\begin{proposition}\label{pr:mustbeiso}
If the extensions $0 \to \cO \to \cE \to \cF \to 0$ and $0 \to \cO \to \cE' \to \cF \to 0$ belong to the same symplectic leaf of
$\bP \Ext^1(\cF,\cO)$, then $\cE \cong \cE'$. 
\end{proposition}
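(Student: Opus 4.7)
The plan is to show that the image of the Poisson bivector at each $\Phi$ is contained in the tangent space at $[\Phi]$ to the homological leaf $L(\cE)$, and then to upgrade this infinitesimal inclusion to the desired global statement. The key identification uses the ``forget the section'' assignment $[\cO\xrightarrow{\Phi}\cE]\mapsto [\cE]$, whose differential at $\Phi$ is the map $H^{1}(\underline{\End}(\cE,\cO))\to H^{1}(\underline{\End}(\cE))$ induced from \Cref{eq:def}. The associated long exact sequence identifies the kernel of this differential with the image of the connecting map $\Gamma(\cF)\to H^{1}(\underline{\End}(\cE,\cO))$.

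A glance at \Cref{con:poissonmap} then does the first-order work: the map appearing in step \Cref{item:2} of that construction is precisely the connecting map above, so the image of the Poisson bivector $T_{\Phi}^{*}\to T_{\Phi}$ lies in the kernel in question. In other words, no Hamiltonian direction at $\Phi$ deforms the isomorphism class of the middle bundle, and the tangent space to the symplectic leaf through $\Phi$ is contained in the tangent space at $[\Phi]$ to $L(\cE)$. (I expect these to be equal, by a dimension count using the presentation $L(\cE)\cong \Gamma(\cE)_{s}/(\Aut(\cE)\times\bC^{*})$ together with \Cref{le:openlocus}, but the proposition only requires the containment.)

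To globalize, I would repeat the same first-order computation at every point $\Phi'\in L(\cE)$---the argument is local and manifestly works verbatim there---so that the Poisson distribution is everywhere tangent to $L(\cE)$. A standard Frobenius-type integrability argument then confines the connected leaf through $\Phi$ to $L(\cE)$, forcing $\cE'\cong \cE$. The main obstacle is this integration step: one first has to verify that $L(\cE)$ is a smooth submanifold of $\bP\Ext^{1}(\cF,\cO)$ so that Frobenius applies, which I expect to follow from the geometric-quotient presentation above via the freeness of the $\Aut(\cE)\times \bC^{*}$-action on stable sections. An algebraic alternative is to produce local Casimir functions on $M$ by pulling back local regular functions from a Kuranishi or versal deformation space for $\cE$; by the bracket computation above such pullbacks Poisson-commute with everything, hence are constant on symplectic leaves and separate isomorphism classes of the middle terms.
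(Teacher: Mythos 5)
Your first-order computation is exactly the paper's: the Poisson bivector factors through the connecting map $\Gamma(\cF)\to H^1(\underline{\End}(\cE,\cO))$ of \Cref{eq:def}, hence its image lies in the kernel of $H^1(\underline{\End}(\cE,\cO))\to H^1(\underline{\End}(\cE))$, i.e.\ the Hamiltonian directions have vanishing Kodaira--Spencer class for the middle term. Where you diverge is the globalization, and your primary route is both heavier than necessary and leaves a real gap. You propose to show the Poisson distribution is tangent to $L(\cE)$ and then ``confine'' the leaf to $L(\cE)$ by Frobenius; but this requires knowing in advance that $L(\cE)$ is a smooth (or at least integral) submanifold, which you defer to the geometric-quotient presentation $\Gamma(\cE)_s/\Aut(\cE)$. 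In this paper that presentation (\Cref{th:git}) and the smoothness of $L(\cE)$ (\Cref{th:isloccl}) are \emph{deduced from} the leaf classification, which rests on \Cref{pr:mustbeiso}; so to avoid circularity you would have to establish the quotient structure independently first (as is done in the rank-one predecessor \cite{00-leaves_xv3}, where the order of the two theorems is reversed). There is also a technical wrinkle: the Poisson distribution is singular (non-constant rank), so classical Frobenius does not apply as stated and one must invoke Stefan--Sussmann/Weinstein-type foliation theory.

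The paper's globalization sidesteps all of this: it never needs $L(\cE)$ to be a manifold. Since a symplectic leaf is by definition a \emph{connected} integral manifold of the image distribution of the bivector, and that distribution is annihilated by the differential of the ``take the middle term'' classifying map, the isomorphism class of $\cE$ is locally constant along any curve in the leaf, hence globally constant on the leaf. Note that this is essentially your own ``algebraic alternative'' (pulling back functions from a deformation space of $\cE$ and observing they are Casimir-like along the leaf); had you led with that, your argument would coincide with the paper's. As written, the main route has an unfilled (and, in context, potentially circular) smoothness hypothesis; the containment of tangent spaces alone does not yield the proposition without some such integration step.
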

\begin{proof}
  Recall that we have identified $S$ with a codimension-two subspace of $H^1(\underline{\End}(\cE,\cO))$, and the Poisson structure map is obtained as the restriction of a map that factors through the connecting morphism
\begin{equation*}
  \Gamma(\cF)\to H^1(\underline{\End}(\cE,\cO))
\end{equation*}
associated to the sequence \Cref{eq:def}. It follows in particular that its image is contained in the kernel of the canonical map
\begin{equation}\label{eq:e1oe1}
  H^1(\underline{\End}(\cE,\cO))\to H^1(\underline{\End}(\cE)). 
\end{equation}
Since the right-hand side of this last display is the space of {\it infinitesimal deformations} \cite[Corollary 2.8]{hrt_def} of $\cE$. The symplectic leaves being \cite[\S\S 1.2, 4.1]{clm} the maximal (connected) integral manifolds tangent to vectors in the {\it kernel} of \Cref{eq:e1oe1}, two points on the same symplectic leaf are connectable by curves along which $\cE$ is of constant isomorphism class. It follows that that isomorphism class is globally constant on any given leaf, and we are done. 
\end{proof}

\begin{remark}
  The first map $\Gamma(\underline{\End}(\cE,\cO))\to \Gamma(\cF)$ making up the composite Poisson structure map on $M$ effects an embedding of the $(n-1)$-dimensional space
\begin{equation*}
  \Gamma(\underline{\End}(\cE,\cO))/(\End(\cF)\oplus\End(\cO))
\end{equation*}
into the $n$-dimensional space $\Gamma(\cF)$. The long exact cohomology sequence associated to the dual
\begin{equation*}
  0\to \underline{\End}(\cF)\oplus \underline{\End}(\cO)\to \underline{\End}(\cE,\cO)\to \cF\to 0
\end{equation*}
to \Cref{eq:mid} (with $\cK=\cO$) implies that this hyperplane of $\Gamma(\cF)$ maps through the connecting map $\Gamma(\cF)\to H^1(\underline{\End}(\cE,\cO))$ onto precisely the tangent space at $\Phi$ to the subvariety of $\bP\Ext^1(\cF,\cO)$ consisting of extensions with the same isomorphism type as $\cE$.
\end{remark}

\begin{lemma}\label{le:samehyp}
  The image of the map \Cref{eq:eoast2e} coincides with the hyperplane
  \begin{equation*}
    \im\left(
      \Gamma(\cE)
      \xrightarrow{\quad}
      \Gamma(\cF)
    \right)
  \end{equation*}
  resulting from the original extension \Cref{eq:origphi}.
\end{lemma}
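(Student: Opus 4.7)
The plan is to compute the image of \Cref{eq:eoast2e} via Serre duality. Because $K_E \cong \cO_E$ on the elliptic curve, the linear dual of \Cref{eq:eoast2e} is the cohomology-level map $H^1(\cF^*) \to H^1(\underline{\End}(\cE,\cO))$ induced by the inclusion $\underline{\Hom}(\cF,\cO) = \cF^* \hookrightarrow \underline{\End}(\cE,\cO)$ of \Cref{eq:mid} (taking $\cK = \cO$). Consequently the image of \Cref{eq:eoast2e} in $\Gamma(\cF)$ is the annihilator of the kernel of this $H^1$-map under the Serre duality pairing $\Gamma(\cF) \otimes H^1(\cF^*) \to H^1(\cO) = \Bbbk$.

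By exactness of the long cohomology sequence of \Cref{eq:mid}, the kernel in question is the image of the preceding connecting homomorphism
\begin{equation*}
  \partial\colon \End(\cF) \oplus \End(\cO) = \Gamma(\underline{\End}(\cF) \oplus \cO) \longrightarrow H^1(\cF^*) = \Ext^1(\cF,\cO),
\end{equation*}
where stability of $\cF$ is used to write $\End(\cF) = \Bbbk$. This $\partial$ is the standard ``obstruction to lifting an endomorphism through the extension $\xi$'': local transitions of $\cE$ in upper-triangular form $\begin{pmatrix} 1 & c_{ij} \\ 0 & g_{ij}\end{pmatrix}$ with $[c_{ij}] = [\xi]$ show that a scalar pair $(\alpha,\beta)$ lifts to the block-diagonal endomorphism whose coboundary is $(\beta-\alpha)\cdot c_{ij}$, so that $\partial(\alpha,\beta) = (\beta-\alpha)\cdot [\xi]$, with image the line $\Bbbk\cdot [\xi] \subset H^1(\cF^*)$.

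The image of \Cref{eq:eoast2e} is therefore the Serre-duality annihilator
\begin{equation*}
  \{\sigma \in \Gamma(\cF) : \sigma \cup [\xi] = 0\} = \ker\!\left(\Gamma(\cF) \to H^1(\cO)\right),
\end{equation*}
the last equality since the connecting homomorphism of $\xi$ sends $\sigma \mapsto \sigma \cup [\xi]$. By the long exact sequence of $\xi$ this kernel coincides with $\im(\Gamma(\cE) \to \Gamma(\cF))$, the claimed hyperplane. The only concrete computation needed is the identification of $\partial$ as multiplication by $[\xi]$; all other ingredients — Serre duality with $K_E \cong \cO_E$, the stability-enforced $\End(\cF) = \Bbbk$, and the interpretation of the connecting map of $\xi$ as cup product — are formal.
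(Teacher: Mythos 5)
Your argument is correct, and it takes a genuinely different route from the paper's. The paper first checks by a dimension count (using $H^1(\cE)=H^1(\cF)=0$) that $\im\left(\Gamma(\cE)\to\Gamma(\cF)\right)$ is a hyperplane, and then establishes the containment of the image of \Cref{eq:eoast2e} in that hyperplane by exhibiting a map of short exact sequences from the dualized sequence \Cref{eq:eoe1oe} to the original extension \Cref{eq:origphi}, with the relevant square a pushout along the projection $\underline{\End}(\cF)\oplus\cO\to\cO$; equality of the two hyperplanes then follows. You instead apply Serre duality to identify \Cref{eq:eoast2e} with the transpose of $H^1\left(\cF^*\hookrightarrow\underline{\End}(\cE,\cO)\right)$, compute the kernel of that map as the image of the connecting homomorphism $\partial$ of \Cref{eq:mid}, identify $\im\partial=\Bbbk\cdot[\xi]$ by the cocycle computation $\partial(\alpha,\beta)=\pm(\beta-\alpha)[\xi]$ (here you should note explicitly that $[\xi]\neq 0$ because stable sections are non-split, which is what makes the image a full line and hence the annihilator a hyperplane; the exactness check $\ker\partial=$ diagonal is consistent with \Cref{eq:endfoscalar}), and finish via the cup-product description of the connecting map of $\xi$. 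The two proofs encode the same essential fact --- that the extension class of the endomorphism sequence is governed by $[\xi]$ --- but the paper packages it as a pushout of extensions while you extract it from the connecting homomorphism; your version is more computational but makes the mechanism explicit and gets the hyperplane statement for free as the annihilator of a line, whereas the paper's pushout assertion is left largely to the reader.
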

\begin{proof}
  That the image in the statement is indeed a hyperplane follows from a simple dimension count and the long exact sequence
  \begin{equation*}
    0\to
    \left(\Gamma(\cO)\cong \Bbbk\right)
    \to
    \Gamma(\cE)
    \to
    \Gamma(\cF)
    \to
    \left(H^1(\cO)\cong \Bbbk\right)
    \to
    H^1(\cE)
    \to
    H^1(\cF)
    \to 0:
  \end{equation*}
  The last (interesting) term $H^1(\cF)$ vanishes because \cite[Lemma 17]{tu} $\cF$ is assumed stable. Furthermore, $H^1(\cE)$ also vanishes: the stability assumption on $\Phi$ together with the positive-degree requirement of \Cref{cv:posdeg} ensure (\Cref{le:degspos}) that the indecomposable summands of $\cE$ must all have positive degrees, so \cite[Fact in Appendix A and Lemma 17]{tu} apply.
  
  As for that hyperplane being the same as the image of \Cref{eq:eoast2e}, that can easily be seen by fitting both this extension and \Cref{eq:eoe1oe} into
  \begin{equation*}
    \begin{tikzpicture}[>=stealth,auto,baseline=(current  bounding  box.center)]
      \path[anchor=base] 
      (0,0) node (l) {$0$}
      +(2,.5) node (lu) {$\underline{\End}(\cF)\oplus \cO$}
      +(2,-.5) node (ld) {$\cO$}
      +(6,.7) node (ru) {$\underline{\End}(\cE,\cO)^*$}
      +(6,-.7) node (rd) {$\cE$}
      +(8,0) node (r) {$\cF$}
      +(9,0) node (rr) {$0$,}
      ;
      \draw[->] (l) to[bend left=6] node[pos=.5,auto] {$\scriptstyle $} (lu);
      \draw[->] (l) to[bend right=6] node[pos=.5,auto] {$\scriptstyle $} (ld);
      \draw[->] (lu) to[bend left=6] node[pos=.5,auto] {$\scriptstyle $} (ru);
      \draw[->] (ld) to[bend right=6] node[pos=.5,auto] {$\scriptstyle \Phi$} (rd);
      \draw[->] (ru) to[bend left=6] node[pos=.5,auto] {$\scriptstyle $} (r);
      \draw[->] (rd) to[bend right=6] node[pos=.5,auto] {$\scriptstyle $} (r);
      \draw[->] (r) to[bend right=0] node[pos=.5,auto] {$\scriptstyle $} (rr);
      \draw[->] (lu) to[bend left=0] node[pos=.5,auto] {$\scriptstyle $} (ld);
      \draw[->] (ru) to[bend left=0] node[pos=.5,auto] {$\scriptstyle $} (rd);
    \end{tikzpicture}
  \end{equation*}
  where the left-hand vertical map is the obvious projection and the square is a pushout. 
\end{proof}

\begin{proposition}\label{pr:orbmap}
  For a stable embedding $\cO\xrightarrow{\Phi} \cE$, the image of the Poisson map $S^*\to S$ of \Cref{eq:tsst} is canonically isomorphic to $\Gamma(\cE)/\End(\cE)\Phi$. 
\end{proposition}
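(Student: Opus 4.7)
The plan is to unpack \Cref{con:poissonmap}, invoke \Cref{le:samehyp}, and then read off the image as a cokernel by inspecting the long exact cohomology sequence of \Cref{eq:def}. Since the Poisson map in \Cref{eq:tsst} factors as $T^*\twoheadrightarrow S^*\to S\hookrightarrow T$, the image of $S^*\to S$ coincides (as a subspace of $T=H^1(\underline{\End}(\cE,\cO))$) with the image of the full composite $T^*\to T$. By \Cref{con:poissonmap}, this is the image of
\[
\Gamma(\underline{\End}(\cE,\cO)^*)\xrightarrow{\text{\Cref{eq:eoast2e}}}\Gamma(\cF)\xrightarrow{\ \partial\ }H^1(\underline{\End}(\cE,\cO)),
\]
where $\partial$ is the connecting homomorphism of \Cref{eq:def}. \Cref{le:samehyp} identifies the image of \Cref{eq:eoast2e} with the image of the global-section map $\Gamma(\cE)\to\Gamma(\cF)$ induced by the extension \Cref{eq:origphi}, so the image of the Poisson map equals the image of the composite $\Gamma(\cE)\to\Gamma(\cF)\xrightarrow{\partial}T$.

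Next I would identify the kernel of $\partial$. From the long exact sequence \Cref{eq:seq}, $\ker\partial$ is the image of $\End(\cE)\to\Gamma(\cF)$, which is the global-sections incarnation of the epimorphism $\underline{\End}(\cE)\to\cF$ in \Cref{eq:def}. The key observation, which I would verify by unwinding the definition of this map (the map $\underline{\End}(\cE)\to\underline{\Hom}(\cO,\cE/\cO)=\cF$ obtained by restricting an endomorphism to the subbundle $\Phi(\cO)\subseteq\cE$ and projecting), is that it factors as
\[
\End(\cE)\xrightarrow{\ f\,\mapsto\, f\circ\Phi\ }\Gamma(\cE)\xrightarrow{\quad}\Gamma(\cF).
\]
Thus the image of $\End(\cE)\to\Gamma(\cF)$ is precisely the image of $\End(\cE)\cdot\Phi\subseteq\Gamma(\cE)$ under $\Gamma(\cE)\to\Gamma(\cF)$.

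Combining: the image of the Poisson map is canonically isomorphic to
\[
\frac{\mathrm{image}\bigl(\Gamma(\cE)\to\Gamma(\cF)\bigr)}{\mathrm{image}\bigl(\End(\cE)\cdot\Phi\to\Gamma(\cF)\bigr)}\cong\frac{\Gamma(\cE)}{\Bbbk\cdot\Phi+\End(\cE)\cdot\Phi}=\frac{\Gamma(\cE)}{\End(\cE)\cdot\Phi},
\]
using that the kernel of $\Gamma(\cE)\to\Gamma(\cF)$ is $\Gamma(\cO)\cdot\Phi=\Bbbk\cdot\Phi$ (from the long exact cohomology sequence of \Cref{eq:ext}, as recalled in the proof of \Cref{le:samehyp}) and that $\Bbbk\cdot\Phi\subseteq\End(\cE)\cdot\Phi$ because $\id_\cE\in\End(\cE)$.

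The one step requiring genuine care is the factorization in the middle paragraph: one must check that the sheaf map $\underline{\End}(\cE)\to\cF$ appearing as the cokernel arrow of \Cref{eq:def} really is ``precompose with $\Phi$, then project onto $\cF$''. This is straightforward from the definition of $\underline{\End}(\cE,\cO)$ as endomorphisms preserving $\Phi(\cO)$ together with the canonical isomorphism $\underline{\Hom}(\cO,\cE)\cong\cE$, but it is the only place where the identification is not purely formal and is where I would be most careful. Everything else is bookkeeping with the long exact sequence.
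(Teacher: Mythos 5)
Your proposal is correct and follows essentially the same route as the paper's proof: unpack \Cref{con:poissonmap}, use \Cref{le:samehyp} to identify the image of \Cref{eq:eoast2e} with the hyperplane $\im(\Gamma(\cE)\to\Gamma(\cF))$, read off $\ker\partial=\im(\End(\cE)\to\Gamma(\cF))$ from \Cref{eq:seq}, and verify that the cokernel arrow of \Cref{eq:def} factors through $\psi\mapsto\psi\circ\Phi$ (the paper does this via the commutative diagram pushing \Cref{eq:def} down to \Cref{eq:origphi}, yielding \Cref{eq:ppf}). The only difference is cosmetic: you are slightly more explicit about the final identification $\im\Gamma(\cE)/\im\End(\cE)\cong\Gamma(\cE)/\End(\cE)\Phi$ via $\Bbbk\Phi\subseteq\End(\cE)\Phi$.
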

\begin{proof}
  We need to unpack the various ingredients in \Cref{con:poissonmap} going into the construction of the Poisson bivector. First, because the isomorphism \Cref{eq:serreiso} makes no difference to any ranks and involves no choices, we henceforth disregard it. This leaves the two items in \Cref{item:1} and \Cref{item:2}, to be discussed separately.

  \begin{enumerate}[(1), wide=0pt]
  \item\label{item:eq:eoast2e} {\bf : the map \Cref{eq:eoast2e}.} Dualize \Cref{eq:mid} with $\cK:=\cO$ to 
    \begin{equation}\label{eq:eoe1oe}
      0\to
      \underline{\End}(\cF)\oplus\underline{\End}(\cO)    
      \to
      \underline{\End}(\cE,\cO)^*
      \to
      \cF
      \to 0
    \end{equation}
    (with the self-duality of the sheaves $\underline{\End}$ implicit), whereby \Cref{eq:eoast2e} fits into
    \begin{equation}\label{eq:eoast2e-long}
      0\to
      \Bbbk^2
      \to
      \Gamma(\underline{\End}(\cE,\cO)^*)
      \xrightarrow{\ \text{\Cref{eq:eoast2e}}\ }
      \Gamma(\cF)
      \to
      \Bbbk^2
      \to
      \left(
        H^1(\underline{\End}(\cE,\cO)^*)
        \cong
        \Bbbk
      \right)
      \to 0.
    \end{equation}
    The inline isomorphism (to $\Bbbk$) is due to
    \begin{equation*}
      \begin{aligned}
        H^1(\underline{\End}(\cE,\cO)^*)
        &\cong \End(\cE,\cO)^*
          \quad\text{Serre duality; note that there is no underline!}\\
        &\cong \Bbbk
          \quad\text{by \Cref{eq:endfoscalar}}.
      \end{aligned}
    \end{equation*}
    The image of \Cref{eq:eoast2e} is the hyperplane of \Cref{le:samehyp}. Note also that the quotient
    \begin{equation*}
      \Gamma(\underline{\End}(\cE,\cO)^*) / \Bbbk^2
    \end{equation*}
    that \Cref{eq:eoast2e} embeds via \Cref{eq:eoast2e-long} into $\Gamma(\cF)$ as said hyperplane is canonically identifiable with the $S^*$ of \Cref{eq:tsst}.

  \item\label{item:eq:ee1o} {\bf : the map \Cref{eq:ee1o}.} It is nothing but the map denoted by $\Gamma(\cF)\to T$ in \Cref{eq:seq}, so its image is (naturally identifiable with) the quotient $\Gamma(\cF)/\im\End(\cE)$. Because we are rather interested in the image of the {\it composition} with \Cref{eq:eoast2e} (discussed in \Cref{item:eq:eoast2e} above), note that we have the inclusion
    \begin{equation*}
      \im\End(\cE)\le\text{ hyperplane }\im\Gamma(\cE)\subset \Gamma(\cF). 
    \end{equation*}
    This follows from the commutative diagram
    \begin{equation*}
      \begin{tikzpicture}[>=stealth,auto,baseline=(current  bounding  box.center)]
        \path[anchor=base] 
        (0,0) node (l) {$0$}
        +(2,.5) node (lu) {$\underline{\End}(\cE,\cO)$}
        +(2,-.5) node (ld) {$\cO$}
        +(6,.7) node (ru) {$\cE\otimes \cE^*$}
        +(6,-.7) node (rd) {$\cE$}
        +(8,0) node (r) {$\cF$}
        +(9,0) node (rr) {$0$,}
        ;
        \draw[->] (l) to[bend left=6] node[pos=.5,auto] {$\scriptstyle $} (lu);
        \draw[->] (l) to[bend right=6] node[pos=.5,auto] {$\scriptstyle $} (ld);
        \draw[->] (lu) to[bend left=6] node[pos=.5,auto] {$\scriptstyle $} (ru);
        \draw[->] (ld) to[bend right=6] node[pos=.5,auto,swap] {$\scriptstyle \Phi$} (rd);
        \draw[->] (ru) to[bend left=6] node[pos=.5,auto] {$\scriptstyle $} (r);
        \draw[->] (rd) to[bend right=6] node[pos=.5,auto] {$\scriptstyle $} (r);
        \draw[->] (r) to[bend right=0] node[pos=.5,auto] {$\scriptstyle $} (rr);
        \draw[->] (lu) to[bend left=0] node[pos=.5,auto] {$\scriptstyle $} (ld);
        \draw[->] (ru) to[bend left=0] node[pos=.5,auto] {$\scriptstyle \id\otimes\Phi^*$} (rd);
      \end{tikzpicture}
    \end{equation*}
    pushing \Cref{eq:def} down to \Cref{eq:origphi}, with the left-hand downward map being restriction of sheafy endomorphisms to $\cO\lhook\joinrel\xrightarrow{\Phi} \cE$. Note also that the global-section morphism induced by the {\it right}-hand vertical map is nothing but 
    \begin{equation}\label{eq:ppf}
      \End(\cE)\ni\psi\xmapsto{\quad}\psi\circ\Phi\in\Gamma(\cE).
    \end{equation}
    Having thus identified the image of $S^*\to S$ with
    \begin{equation*}
      \Gamma(\cE)/\im \End(\cE) = \Gamma(\cE)/\End(\cE)\Phi,
    \end{equation*}
    we are done. 
  \end{enumerate}
\end{proof}

\begin{remark}\label{re:pol-end}
  Compare \Cref{pr:orbmap} to \cite[Lemma 3.2]{pl98}, which fits the kernel of the overall composition \Cref{eq:tsst} as the middle term in an extension
  \begin{equation*}
    0\xrightarrow{}
    \End(\cK)\oplus \End(F)
    \xrightarrow{\quad}
    \bullet
    \xrightarrow{\quad}
    \End(\cE)/\End(\cE,\cK)
    \xrightarrow{}0.
  \end{equation*}
  In the present case $\cK=\cO$ the leftmost term
  \begin{equation*}
    \End(\cK)\oplus \End(F)\cong \Bbbk^2
  \end{equation*}
  is precisely what the passage from $T^*$ to $S^*$ (effected in focusing on the middle map $S^*\xrightarrow{}S$ in \Cref{eq:tsst}) annihilates, so \cite[Lemma 3.2]{pl98} can be recast as saying that the kernel of the Poisson structure of interest here is identifiable with $\End(\cE)/\End(\cE,\cO)$. That quotient, in turn, is nothing but the image of \Cref{eq:ppf}. The two results thus differ in emphasis but coincide in substance. 
\end{remark}

Consider the following generalization of \cite[Lemma 3.10]{00-leaves_xv3}; the argument is a slight reworking of the latter's proof. 

\begin{lemma}\label{le:autactsfree}
  For a stable section $\Phi\in\Gamma(\cE)$ as in \Cref{eq:origphi} the $\End(\cE_1)$-module $\End(\cE_1)\Phi$ is free.

  In particular, $\Aut(\cE)$ acts freely on the open subset $\Gamma(\cE)_s\subset \Gamma(\cE)$ of \Cref{le:openlocus}. 
\end{lemma}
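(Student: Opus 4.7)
The plan is to reduce both assertions to the single claim that the $\Bbbk$-linear map
\[
  \End(\cE)\xrightarrow{\quad}\Gamma(\cE),\qquad \psi\xmapsto{\quad}\psi\circ\Phi
\]
is injective. Granted this, $\End(\cE)\Phi$ is isomorphic to $\End(\cE)$ as a left module over itself, hence cyclic with trivial annihilator and thus free of rank one; and freeness of the $\Aut(\cE)$-action on $\Gamma(\cE)_s$ follows immediately, because $\psi\cdot\Phi=\Phi$ for $\psi\in\Aut(\cE)$ rewrites as $(\psi-\id)\circ\Phi=0$, forcing $\psi=\id$.

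To establish the injectivity I would take $\psi\in\End(\cE)$ with $\psi\circ\Phi=0$ and feed it into the extension \Cref{eq:origphi}. Since $\psi$ kills the subsheaf $\Phi(\cO)\subset\cE$, it factors as $\psi=\overline{\psi}\circ\pi$ for some $\overline{\psi}\in\Hom(\cF,\cE)$, where $\pi:\cE\twoheadrightarrow\cF$ is the projection in \Cref{eq:origphi}. Composing with $\pi$ produces $\pi\circ\overline{\psi}\in\End(\cF)$, and since $\cF$ is stable one has $\End(\cF)=\Bbbk$, so $\pi\circ\overline{\psi}=\lambda\cdot\id_{\cF}$ for some scalar $\lambda$.

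The substantive step (and the only place where the hypotheses get used) is ruling out both values of $\lambda$ via the two halves of stability in \Cref{def:stabsect}\Cref{item:def:stabsect-stabquot} together with the positive-slope requirement of \Cref{cv:posdeg}. If $\lambda\ne 0$ then $\lambda^{-1}\overline{\psi}$ splits $\pi$, contradicting the non-splitness of \Cref{eq:origphi} built into the stability of $\Phi$. If $\lambda=0$ then $\overline{\psi}$ factors through $\ker\pi=\Phi(\cO)\cong\cO$, giving a nonzero element of $\Hom(\cF,\cO)$; but $\cF$ is stable of positive slope $n/k>0=\mu(\cO)$, so $\Hom(\cF,\cO)=0$. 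Either way $\overline{\psi}=0$, hence $\psi=0$, and the proof is complete. The only real obstacle is this two-case elimination; everything else is a clean bookkeeping exercise with the defining extension.
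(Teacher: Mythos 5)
Your proof is correct. The reduction to injectivity of $\psi\mapsto\psi\circ\Phi$ on $\End(\cE)$ is exactly the paper's reduction, and the "in particular" clause is handled identically. Where you diverge is in how that injectivity is established: the paper composes further with $\Gamma(\cE)\to\Gamma(\cF)$ and reads off from the long exact cohomology sequence of \Cref{eq:def} that the kernel of the composite is $\End(\cE,\cO)\cong\Bbbk$ (i.e.\ \Cref{eq:endfoscalar}), so any $\psi$ annihilating $\Phi$ is a scalar and hence zero. You instead factor such a $\psi$ through the quotient $\cF$ by hand and eliminate the two cases $\lambda\neq 0$ (would split the extension) and $\lambda=0$ (would give a nonzero element of $\Hom(\cF,\cO)$, impossible by the slope comparison $\mu(\cF)=n/k>0=\mu(\cO)$ for semistable source and target). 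This is in effect a self-contained re-derivation of \Cref{eq:endfoscalar} restricted to the kernel in question: it is longer but makes explicit exactly which pieces of stability and non-splitness are consumed, whereas the paper's version is shorter by reusing an identification already made in the proof of \Cref{le:n+1}. Both arguments are sound.
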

\begin{proof}
  The claim is that the upper left-hand map in the composition
  \begin{equation*}
    \begin{tikzpicture}[auto,baseline=(current  bounding  box.center)]
      \path[anchor=base] 
      (0,0) node (l) {$\End(\cE)$}
      +(4,.5) node (u) {$\Gamma(\cE)$}
      +(6,0) node (r) {$\Gamma(\cF)$}
      ;
      \draw[->] (l) to[bend left=6] node[pos=.7,auto] {$\scriptstyle \psi\xmapsto{\quad}\psi\circ\Phi$} (u);
      \draw[->] (u) to[bend left=6] node[pos=.5,auto] {$\scriptstyle $} (r);
      \draw[->] (l) to[bend right=6] node[pos=.5,auto,swap] {$\scriptstyle $} (r);
    \end{tikzpicture}
  \end{equation*}
  is injective. The long exact cohomology sequence of \Cref{eq:def} shows that the kernel of the composition as a whole is $\End(\cE,\cO) \cong \Bbbk$ (the scalars: \Cref{eq:endfoscalar}). Since of course no non-zero scalars annihilate $\Phi$, we are done.
\end{proof}

\begin{theorem}\label{th:detleaves}
  Let $\cF$ be a degree-$n$ rank-$k$ stable bundle on $E$. 
  
  The symplectic leaves of the Poisson structure on $\bP^{n-1}=\bP\Ext^1(\cF,\cO)$ obtained by varying the maps $S^*\to S$ of \Cref{eq:tsst} are precisely the loci consisting of extensions \Cref{eq:origphi} with constant middle-term isomorphism class.
\end{theorem}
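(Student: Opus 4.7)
The plan is to combine Proposition \ref{pr:mustbeiso}, Lemma \ref{le:autactsfree} and Proposition \ref{pr:orbmap} into a tangent-space identification that, together with the maximality property of symplectic leaves, pins down $L_\Phi = L(\cE)$.

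One containment requires no extra work: Proposition \ref{pr:mustbeiso} already shows that the symplectic leaf $L_\Phi$ through a stable embedding $\Phi : \cO \to \cE$ is contained in the homological leaf $L(\cE)$ classifying non-split extensions with middle term isomorphic to $\cE$. For the opposite inclusion, the strategy is to exhibit $L(\cE)$ as a connected integral submanifold of the Poisson distribution whose tangent space at $\Phi$ coincides with the image of the Poisson map $S^* \to S$ there; the maximality property characterizing symplectic leaves then forces $L(\cE) \subseteq L_\Phi$.

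To realize $L(\cE)$ as such a submanifold, I would analyze the natural map $\pi : \Gamma(\cE)_s \to \bP^{n-1} \cong \bP\Ext^1(\cF, \cO)$ sending a stable section to the class of its extension. Two embeddings $\Phi_1, \Phi_2 : \cO \to \cE$ hit the same point of $\bP\Ext^1(\cF, \cO)$ precisely when they differ by an element of $\Aut(\cE)$, the projective rescaling and the $\Aut(\cO) \cong \Bbbk^\times$ ambiguity both being absorbed into the scalars $\Bbbk^\times \subset \Aut(\cE)$. Lemma \ref{le:autactsfree} then tells us the $\Aut(\cE)$-action on $\Gamma(\cE)_s$ is free, yielding a smooth manifold structure on $L(\cE) = \pi(\Gamma(\cE)_s)$ of dimension $\dim \Gamma(\cE) - \dim \End(\cE)$, with canonical tangent-space identification
\[
  T_\Phi L(\cE) \;\cong\; \Gamma(\cE)/\End(\cE)\cdot \Phi.
\]
Proposition \ref{pr:orbmap} identifies the right-hand side with the image of the Poisson map at $\Phi$, so $T_\Phi L(\cE) = T_\Phi L_\Phi$ for every $\Phi \in L(\cE)$.

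Finally, $L(\cE)$ is connected (it is the image of the irreducible variety $\Gamma(\cE)_s$), and the matching tangent spaces make it an integral submanifold of the Poisson distribution through $\Phi$. Maximality of $L_\Phi$ delivers the desired reverse inclusion $L(\cE) \subseteq L_\Phi$, closing out the equality $L_\Phi = L(\cE)$.

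\textbf{Main obstacle.} The most delicate point is justifying that $L(\cE)$ genuinely carries a smooth-manifold structure with the advertised tangent space at $\Phi$, without circularly invoking the full geometric-quotient assertion of Theorem B (which is to be derived \emph{from} the present result). Concretely, one needs to produce a local slice to the free $\Aut(\cE)$-action near $\Phi \in \Gamma(\cE)_s$ and verify that the restriction of $\pi$ to that slice is a smooth embedding with the expected image and differential; this is a standard package for free actions of connected algebraic groups on smooth varieties, but it is where care is needed to avoid duplicating the GIT machinery of Theorem B.
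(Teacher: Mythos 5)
Your first half (the containment of each symplectic leaf in a locus of constant middle term, via \Cref{pr:mustbeiso}) is exactly the paper's argument. For the reverse inclusion your overall idea --- use \Cref{pr:orbmap} to match tangent spaces and then invoke maximality of leaves --- is the right one, but the route you choose runs through a step that is genuinely missing, and which you correctly flag as the main obstacle: you need $L(\cE)$ to carry a smooth-manifold structure with tangent space $\Gamma(\cE)/\End(\cE)\Phi$ at each point before you can call it an integral submanifold. That is not a formality. In the paper this smoothness is only established \emph{after} the present theorem (\Cref{th:isloccl}), and its proof of smoothness actually \emph{uses} the fact that $L(\cE)$ is a symplectic leaf (via the theory of singular foliations); freeness of the $\Aut(\cE)$-action (\Cref{le:autactsfree}) by itself does not produce a slice or a smooth quotient, and setting up the requisite local slices from scratch would essentially duplicate \Cref{th:git}, which is meant to be a consequence of this theorem rather than an input.

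The paper sidesteps the obstacle entirely by arguing on the source rather than on the image. Take the connected smooth variety $U:=\Gamma(\cE)_s$ (connected and open by \Cref{le:openlocus}) and the map $\Psi:U\to\bP^{n-1}$ sending a stable section to its extension class. By \Cref{pr:orbmap}, at every $\Phi\in U$ the differential $d_\Phi\Psi$ has image exactly $\Gamma(\cE)/\End(\cE)\Phi$, i.e.\ exactly the image of the Poisson map $S^*\to S$, which is by definition the tangent space to the symplectic leaf through $\Psi(\Phi)$. So $\Psi$ is everywhere tangent to the symplectic distribution, and a smooth map from a \emph{connected} manifold whose differential lands in the distribution has image contained in a single leaf. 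Hence $L(\cE)=\Psi(U)$ lies in one leaf, no manifold structure on $L(\cE)$ required; combined with \Cref{pr:mustbeiso} this gives equality. I would recommend restructuring your second half along these lines: it uses only the ingredients you already have and removes the delicate point rather than deferring it.
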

\begin{proof}
  In one direction, \Cref{pr:mustbeiso} ensures that the symplectic leaves are {\it contained} in said loci. Conversely, fix $\cE$, and consider the stable locus $U:=\Gamma(\cE)_s\subset \Gamma(\cE)$ provided by \Cref{le:openlocus} along with the map
  \begin{equation}\label{eq:psifromu}
    U\ni \Phi
    \xmapsto{\quad\Psi\quad}
    \Psi(\Phi):=
    \left(
      \text{resulting extension \Cref{eq:origphi} housing $\Phi$}
    \right)
    \in \bP^{n-1}.
  \end{equation}
  

  By \Cref{pr:orbmap}, at each $\Phi\in U$ the differential
  \begin{equation*}
    T_{\Phi}U
    \xrightarrow{\quad d_{\Phi}\Psi\quad}
    T_{\Psi(\Phi)}\bP^{n-1}
  \end{equation*}
  corestricts to a surjection onto the tangent space to the symplectic leaf through $\Psi(\Phi)$, so the image $\Psi(U)$ of the (connected, by \Cref{le:openlocus}!) manifold $U$ must be contained in a single symplectic leaf.
\end{proof}

\begin{remark}\label{re:postleaves}
  There is a claim made on \cite[p.67]{FO98}, to the effect that for a parabolic subgroup $P\le G$ of a semisimple (presumably complex, linear algebraic) group $G$ the leaves of a certain Poisson structure on the moduli space of $P$-bundles on $E$ are exactly the preimages of the structure-group extension map to $G$-bundles.

  \cite[p.690]{pl98} notes that the discussion in \cite[\S 3]{pl98} recovers that on \cite[p.67]{FO98}, presumably for $G=GL(\rk \cE)$ (extending the setup to {\it reductive} groups) and $P\le G$ a {\it maximal} parabolic, i.e. corresponding to the shortest flags. \Cref{th:detleaves} is then essentially a verification of the aforementioned claim in \cite{FO98} regarding symplectic leaves in this specific instance.
\end{remark}


\begin{definition}\label{def:hleaff1}
  The {\it homological leaf} $L(\cE)$ is the image of the morphism
  \begin{equation}\label{eq:psimor}
    \Gamma(\cE)_s
    \xrightarrow{\quad\Psi=\Psi_{\cE}\quad}
    \bP\Ext^1(\cF,\cO)\cong \bP^{n-1}
    ,\quad
    n:=\deg \cF
  \end{equation}
  of \Cref{eq:psifromu}, sending a stable section $\Phi\in\Gamma(\cE)_s$ to the class of the corresponding extension \Cref{eq:origphi}. 
\end{definition}

For rank-2 $\cE$ \cite[Theorem 4.7]{00-leaves_xv3} proves the local closure of $L(\cE)\subset \bP^{n-1}$ by directly identifying them with subschemes of secant varieties. There are more direct routes to that conclusion.

\begin{theorem}\label{th:isloccl}
  The homological leaves $L(\cE)\subset \bP^{n-1}\cong \bP\Ext^1(\cF,\cO)$ are locally closed smooth subvarieties. 
\end{theorem}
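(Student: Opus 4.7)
My plan is to derive the claim from the structural properties of the morphism $\Psi=\Psi_\cE\colon \Gamma(\cE)_s\to\bP^{n-1}$ of \Cref{def:hleaff1}, whose set-theoretic image is $L(\cE)$. Three features of $\Psi$ drive the argument: the source is smooth and irreducible (open in the affine space $\Gamma(\cE)$); by \Cref{pr:orbmap} combined with the injectivity of $\psi\mapsto\psi\circ\Phi$ recorded in \Cref{le:autactsfree}, the differential $d_\Phi\Psi$ has constant rank $\dim\Gamma(\cE)-\dim\End(\cE)$ with kernel exactly the tangent space $\End(\cE)\cdot\Phi$ to the $\Aut(\cE)$-orbit through $\Phi$; and the fibers of $\Psi$ coincide with the $\Aut(\cE)$-orbits for the free action $\psi\cdot\Phi:=\psi\circ\Phi$ of \Cref{le:autactsfree}.

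I would begin by verifying the fiber identification. Given $\Phi,\Phi'\in\Gamma(\cE)_s$ with $\Psi(\Phi)=\Psi(\Phi')$, the corresponding extension classes are proportional in $\Ext^1(\cF,\cO)$, so the two extensions are equivalent after rescaling; unwinding an equivalence diagram and absorbing the overall projective scalar as a scalar automorphism $\lambda\cdot\id_\cE\in\Aut(\cE)$ produces $\psi\in\Aut(\cE)$ with $\Phi'=\psi\circ\Phi$. The simplicity of the stable bundle $\cF$ is what allows the absorption: the identifications $\cE/\Phi(\cO)\cong\cF\cong\cE/\Phi'(\cO)$ are unique up to scalar.

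I would then pass to the geometric quotient $Q:=\Gamma(\cE)_s/\Aut(\cE)$. The group $\Aut(\cE)$ is a smooth affine algebraic group (open in the affine space $\End(\cE)$), acting freely on the smooth quasi-affine variety $\Gamma(\cE)_s$ with orbits of constant dimension $\dim\End(\cE)$. The general theory of quotients by free smooth algebraic group actions produces $Q$ as a smooth algebraic space, and $\Psi$ then factors as $\Gamma(\cE)_s\twoheadrightarrow Q\xrightarrow{\overline\Psi}\bP^{n-1}$, with $\overline\Psi$ set-theoretically injective (by the fiber identification) and of injective differential at every point (by the kernel computation). An unramified monomorphism of finite-type schemes (or algebraic spaces) over $\bC$ is automatically a locally closed immersion, so $\overline\Psi$ realizes $L(\cE)\cong Q$ as a locally closed smooth subvariety of $\bP^{n-1}$.

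The main technical obstacle is the clean production of the quotient $Q$ with its smooth structure and the verification that $\overline\Psi$ is unramified as a morphism of schemes, not merely bijective on tangent spaces at closed points; in particular one needs to know that the scheme-theoretic fibers of $\Psi$ are reduced, so that they really equal the $\Aut(\cE)$-orbits rather than infinitesimal thickenings. The $k=1$ analogue of the quotient existence appears as \cite[Theorem 5.7]{00-leaves_xv3}; for general $k$ one can either adapt that approach or invoke slice-theoretic machinery tailored to free actions of smooth (not necessarily reductive) algebraic groups, obtaining étale-local charts that simultaneously present $Q$ as smooth and $\overline\Psi$ as unramified.
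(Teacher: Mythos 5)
Your reduction to the morphism $\overline\Psi\colon Q\to\bP^{n-1}$ being an injective unramified morphism (equivalently, a monomorphism locally of finite type) is fine as far as it goes, but the step you lean on to finish --- ``an unramified monomorphism of finite-type schemes over $\bC$ is automatically a locally closed immersion'' --- is false, and this is where the argument breaks. The standard counterexample: let $C\subset\bA^2$ be a nodal cubic and $\nu\colon\bA^1\to C$ its normalization, with $\nu^{-1}(\text{node})=\{t_+,t_-\}$; the restriction of $\nu$ to $\bA^1\setminus\{t_-\}$, composed with $C\hookrightarrow\bA^2$, is an injective unramified monomorphism from a smooth curve to a smooth surface, yet its image is the nodal cubic --- locally closed but \emph{not} smooth --- and the map is not an immersion. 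So injectivity together with everywhere-injective differentials, even with smooth source and smooth ambient target, does not prevent the closure of the image from meeting the image along a second branch; nothing in your argument excludes $L(\cE)$ from acquiring such a crossing, and hence neither the immersion property of $\overline\Psi$ nor the smoothness of $L(\cE)$ follows from what you have established.

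The paper closes exactly this gap using inputs you never invoke. Local closure is obtained from Chevalley constructibility together with an openness criterion of Parusi\'nski: since by \Cref{le:autactsfree} all fibers of $\Psi$ have the same dimension $\dim\Aut(\cE)$, the corestriction $\Gamma(\cE)_s\to\overline{\Psi(\Gamma(\cE)_s)}$ is an open map, so the image is open in its closure. Smoothness is then deduced from the fact --- already available from \Cref{th:detleaves} and \Cref{pr:orbmap} --- that $L(\cE)$ is a symplectic leaf of a singular foliation: by the Androulidakis--Skandalis local structure theorem every leaf is locally the fiber of a submersion, hence an embedded submanifold, hence smooth; this foliation-theoretic input is precisely what rules out the nodal-type pathology above. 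If you wish to keep your quotient-first strategy (the route of the $k=1$ predecessor paper), you must either prove separately that $\overline\Psi$ is open onto its image, or otherwise upgrade the monomorphism to an immersion; the monomorphism property alone will not do it.
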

\begin{proof}
  We address the two substantive claims in turn.
  
  \begin{enumerate}[(I), wide=0pt]
  \item\label{item:lclos} {\bf : Local closure.} The homological leaves are in any case {\it constructible} (i.e. \cite[Exercise II.3.18]{hrt} finite unions of locally closed subsets) by Chevalley's theorem \cite[Exercise II.3.19]{hrt}. Consider the composition
    \begin{equation}\label{eq:psicoext}
      \Gamma(\cE)_s=:U
      \xrightarrow{\quad\Psi\quad}
      \Psi(U)
      \lhook\joinrel\xrightarrow{\quad}
      \overline{\Psi(U)}
    \end{equation}
    (the closure is unambiguous \cite[Proposition 7]{serre-gaga}: the same in both the Zariski and ordinary topologies). 

    
    By \Cref{le:autactsfree} the fibers of \Cref{eq:psicoext} over every point in its image all have dimension $\dim\Aut(\cE)$, so the equidimensionality condition of \cite[Remark 1.6]{parus_constr} is met. That remark then shows that \Cref{eq:psicoext} is open (it makes no difference which topology \cite[Lemma 2.1]{parus_constr}), meaning precisely the conclusion: $\Psi(U)$ is open in its closure.
    
    
  \item {\bf : Smoothness.} At this stage, given local closure, we can fall back on the general theory of {\it singular foliations} \cite{as_holonomy}, of which symplectic foliations attached to Poisson structures are instances \cite[Example C.23]{clm}.
    
    
    Consider a point $p\in \bP^{n-1}$, and the leaf $L\ni p$ through it. According to \cite[Proposition 1.12]{as_holonomy}, a standard-topology neighborhood $W\ni p$ fibers over a manifold of dimension $(n-1)-\dim L$, with connected fibers contained in leaves. Working only over a neighborhood of $p$ sufficiently small to ensure the closure of $L$ (possible, by local closure) this recovers $L$ as the preimage of a point through a submersion. $L$ must \cite[Corollary 5.13]{lee2013introduction} thus be an {\it embedded} submanifold \cite[\S 5, pp.98-99]{lee2013introduction} and hence smooth as a complex manifold. But then it is also a smooth complex variety \cite[\S XII, Proposition 3.1(iv)]{sga1}, and we are done.
  \end{enumerate} 
\end{proof}


\begin{remarks}\label{res:closedandimm}
  \begin{enumerate}[(1),wide=0pt]
  \item Even given the local closure and immersed-submanifold status of the leaves, some further reference to the specifics of the situation was necessary in the proof of the smoothness claim of \Cref{th:isloccl}: the {\it figure-eight curve} \cite[Example 4.19]{lee2013introduction} is a closed, {\it immersed} \cite[preceding Proposition 5.18]{lee2013introduction} but not embedded submanifold of $\bR^2$, and plainly not smooth as a subset of the plane. 
    
  \item The symplectic leaves of a holomorphic structure are a good deal more than immersed complex submanifolds (though they certainly are that, their tangent spaces being invariant \cite[Proposition 2.11]{lgsx_holpoisman} under the {\it almost complex structure} \cite[Definition 12.4]{dasilva_symplec} of the ambient complex manifold). Every leaf is {\it initial} \cite[Definition C.8]{clm} (or {\it weakly embedded} \cite[following Corollary 5.30]{lee2013introduction}): given a factorization
    \begin{equation*}
      \begin{tikzpicture}[>=stealth,auto,baseline=(current  bounding  box.center)]
        \path[anchor=base] 
        (0,0) node (l) {$N$}
        +(2,.5) node (u) {$L$}
        +(4,0) node (r) {$M$}
        ;
        \draw[->] (l) to[bend left=6] node[pos=.5,auto] {$\scriptstyle $} (u);
        \draw[right hook->] (u) to[bend left=6] node[pos=.5,auto] {$\scriptstyle $} (r);
        \draw[->] (l) to[bend right=6] node[pos=.5,auto,swap] {$\scriptstyle $} (r);
      \end{tikzpicture}
    \end{equation*}
    of a complex-manifold morphism $N\to M$ through a leaf inclusion $L\subseteq M$, the upper left-hand map is also a complex-manifold morphism.

    This follows from the analogue \cite[Theorem C.20]{clm} for plain (smooth) Poisson structures, together with the fact that the maps of complex manifolds are precisely the smooth-manifold morphisms which intertwine the almost complex structures. 
  \end{enumerate}  
\end{remarks}


\begin{corollary}\label{cor:smff}
  The morphism \Cref{eq:psimor} is smooth and faithfully flat. 
\end{corollary}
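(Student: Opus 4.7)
The plan is to reduce both assertions to standard algebro-geometric facts: between smooth varieties a morphism is smooth iff its differential is surjective at every point, and a smooth surjection is automatically faithfully flat (smoothness implies flatness, and a flat surjection is faithfully flat by definition). All the substantive work has already been done upstream, so what remains is assembly.

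First I would record smoothness of source and target. The source $\Gamma(\cE)_s$ is open (\Cref{le:openlocus}) in the affine space $\Gamma(\cE)$, hence smooth. The target $L(\cE)$ is smooth by \Cref{th:isloccl}. Surjectivity of $\Psi_\cE$ onto $L(\cE)$ is built into \Cref{def:hleaff1}, where $L(\cE)$ was defined as the image of $\Psi_\cE$.

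The crux is pointwise surjectivity of the differential: for every $\Phi\in\Gamma(\cE)_s$, the map
\[
  d_\Phi\Psi_\cE\colon T_\Phi\Gamma(\cE)_s\xrightarrow{\quad} T_{\Psi(\Phi)}L(\cE)
\]
is surjective. This is precisely what was used in the proof of \Cref{th:detleaves}: by \Cref{pr:orbmap} the image of $d_\Phi\Psi_\cE$ inside the ambient $T_{\Psi(\Phi)}\bP^{n-1}$ is (canonically identified with) $\Gamma(\cE)/\End(\cE)\Phi$, namely the image of the Poisson bivector map $S^{\ast}\to S$ at $\Psi(\Phi)$, and by the very definition of a symplectic leaf this image is exactly the tangent space $T_{\Psi(\Phi)}L(\cE)$.

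At that point smoothness of $\Psi_\cE$ follows from the Jacobian criterion (a morphism of smooth $\bC$-varieties whose differential is everywhere surjective is smooth), and faithful flatness follows by combining smoothness (hence flatness) with the surjectivity already recorded. I do not expect a serious obstacle here: the real content was \Cref{pr:orbmap} and \Cref{th:isloccl}, and the present corollary is essentially the book-keeping that packages them.
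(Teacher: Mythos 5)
Your proposal is correct and follows essentially the same route as the paper: smoothness of source and target (the latter from \Cref{th:isloccl}), surjectivity of the differential at every point via \Cref{pr:orbmap} combined with the leaf identification of \Cref{th:detleaves}, the Jacobian criterion (the paper cites \cite[Proposition III.10.4]{hrt}), and then faithful flatness as flatness (definitional for smooth morphisms) plus the surjectivity built into \Cref{def:hleaff1}. No gaps.
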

\begin{proof}
  Being a morphism between smooth varieties (by \Cref{th:isloccl}) as well as a {\it submersion} (i.e. \cite[\S 4, p.77]{lee2013introduction} with surjective differentials) by \Cref{pr:orbmap}, it is smooth \cite[Proposition III.10.4]{hrt}. Flatness is a consequence of smoothness as a matter of {\it definition} \cite[\S III.10]{hrt}, whence faithful flatness = flatness + surjectivity \cite[\S 6.7.8]{ega1}. 
\end{proof}

The selfsame map \Cref{eq:psimor} is a good deal more though. Recall the notion of {\it geometric quotient} by a linear algebraic group of \cite[Definition 0.6]{mumf-git}.

\begin{theorem}\label{th:git}
  The map \Cref{eq:psimor} realizes the homological leaf as the geometric quotient
  \begin{equation*}
    L(\cE)\cong \Gamma(\cE)_s/\Aut(\cE).
  \end{equation*}
\end{theorem}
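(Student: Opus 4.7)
The plan is to verify that the map $\Psi$ of \Cref{eq:psimor} satisfies the axioms of \cite[Definition 0.6]{mumf-git} for a geometric quotient of $\Gamma(\cE)_s$ by the natural $\Aut(\cE)$-action. Two of the three axioms drop out nearly for free from the infrastructure already in place: surjectivity holds by the definition of $L(\cE)$ (\Cref{def:hleaff1}), and the submersion axiom is a consequence of the openness of faithfully flat finite-type morphisms together with \Cref{cor:smff}. This leaves (i) fibers of $\Psi$ equal $\Aut(\cE)$-orbits, and (ii) the structure-sheaf matching $\cO_{L(\cE)} \cong (\Psi_*\cO_{\Gamma(\cE)_s})^{\Aut(\cE)}$.

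For (i), the ``orbits $\subseteq$ fibers'' inclusion is formal: given $\psi \in \Aut(\cE)$ and $\Phi \in \Gamma(\cE)_s$, one has a commutative ladder exhibiting projective equivalence of the extensions attached to $\Phi$ and $\psi\circ\Phi$, with vertical maps $\psi$ in the middle and the scalars by which $\psi$ acts on $\cO$ (via $\End(\cO)=\Bbbk$) and on $\cF$ (via $\End(\cF)=\Bbbk$, using the stability of $\cF$) on the outer rungs. For the converse inclusion, if $\Psi(\Phi)=\Psi(\Phi')$ the extension classes in $\Ext^1(\cF,\cO)$ are proportional; the standard $\Ext^1$-vs-extension dictionary \cite[\S III.5]{gm_halg_2e_2003} then produces a ladder isomorphism whose middle arrow $\psi \in \Aut(\cE)$ takes $\Phi$ to a scalar multiple of $\Phi'$, and absorbing that scalar into $\psi$ places $\Phi$ and $\Phi'$ in the same $\Aut(\cE)$-orbit.

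For (ii), with (i) secured, the combination of free action (\Cref{le:autactsfree}), fibers-equal-orbits, and faithful flatness of $\Psi$ promotes the quotient to a principal $\Aut(\cE)$-bundle: the action map
$$\Aut(\cE)\times\Gamma(\cE)_s \xrightarrow{\ (\psi,\Phi)\mapsto(\psi\circ\Phi,\Phi)\ } \Gamma(\cE)_s \times_{L(\cE)} \Gamma(\cE)_s$$
is bijective on geometric points and, by smoothness of both source and target combined with freeness of the action (which renders it a closed immersion of equidimensional smooth varieties), an isomorphism of schemes. Faithfully flat descent then identifies $\cO_{L(\cE)}$ with the $\Aut(\cE)$-invariants inside $\Psi_*\cO_{\Gamma(\cE)_s}$, completing the verification.

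The main obstacle will be the converse half of (i): extracting an honest element of $\Aut(\cE)$ out of the bare assertion that two projective extension classes coincide demands a diagram chase careful enough to separate the scalar ambiguity that lives in $\bP\Ext^1$ from genuine equivalence of extensions in $\Ext^1$. The stability of $\cF$, which collapses $\End(\cF)$ to $\Bbbk$, is the key simplifying feature that keeps this scalar bookkeeping tractable; everything else compiles essentially automatically from the preceding lemmas and \Cref{cor:smff}.
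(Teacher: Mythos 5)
Your proposal is correct in substance and isolates the same two load-bearing facts as the paper's proof: that the fibres of $\Psi$ are precisely the $\Aut(\cE)$-orbits, and that $\Psi$ is open/submersive. The fibre--orbit identification in both arguments ultimately rests on $\Aut(\cF)\cong\bC^\times$ for the stable $\cF$, which is exactly what tames the scalar ambiguity between $\Ext^1(\cF,\cO)$ and $\bP\Ext^1(\cF,\cO)$; your ladder-chasing account of this is a correct expansion of what the paper dispatches in one line. Where you genuinely diverge is in how the structure-sheaf axiom $\cO_{L(\cE)}\cong\bigl(\Psi_*\cO_{\Gamma(\cE)_s}\bigr)^{\Aut(\cE)}$ is discharged: the paper outsources it wholesale to Borel's criterion \cite[Chapter II, Proposition 6.6]{borel-LAG-se} --- a surjective, open, separable orbit map from an irreducible variety onto a normal one is automatically a quotient in the required sense --- feeding in normality of $L(\cE)$ (smoothness, \Cref{th:isloccl}), irreducibility of $\Gamma(\cE)_s$, and characteristic zero for separability. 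You instead upgrade $\Psi$ to a principal $\Aut(\cE)$-bundle and conclude by faithfully flat descent. Both routes work; yours is more self-contained and yields the stronger torsor statement, while the paper's is shorter at the cost of a black-box citation.

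One justification in your step (ii) needs repair: freeness of the action does \emph{not} by itself make the action map $\Aut(\cE)\times\Gamma(\cE)_s\to\Gamma(\cE)_s\times_{L(\cE)}\Gamma(\cE)_s$ a closed immersion (free actions of affine algebraic groups need not be proper, and set-theoretic freeness says nothing about the scheme structure of the graph). What actually closes the argument is that this map is a bijective morphism of irreducible varieties whose target is smooth --- hence normal --- by base change of the smooth $\Psi$ from \Cref{cor:smff}; in characteristic zero a bijective morphism onto a normal variety is birational and therefore an isomorphism by Zariski's main theorem. With that substitution your descent argument goes through.
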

\begin{proof}
  We run through the hypotheses of \cite[Chapter II, Proposition 6.6]{borel-LAG-se}. $\Psi$ is of course surjective, and we noted its openness in the proof of \Cref{th:isloccl}. $\Psi$ is also
  \begin{itemize}
  \item an {\it orbit map} \cite[Chapter II, \S 6.3]{borel-LAG-se} (its fibers are precisely the orbits of $\Aut(\cE)$), as follows immediately from the fact that $\cF$ being stable, $\Aut(\cF)\cong \bC^{\times}$ \cite[Definition 10.18 and Corollary 10.25]{muk-invmod};
  \item and {\it separable} \cite[Chapter AG, \S 8.2]{borel-LAG-se} because we are in characteristic zero (so {\it all} field extensions are separable).
  \end{itemize}
  The conclusion follows from the fact that $\Psi$ has irreducible domain (open subscheme $\Gamma(\cE)_s\subset \Gamma(\cE)$ of an affine space) and normal codomain $L(\cE)$ (because smooth: \Cref{th:isloccl}).
\end{proof}

\section{Relevant middle terms}\label{se:midterm}

We made the following remark in passing above, in the course of the proof of \Cref{le:samehyp}.

\begin{lemma}\label{le:degspos}
  The indecomposable summands of a positive-degree vector bundle $\cE$ with non-empty homological leaf $L(\cE)$ themselves have of positive degree.
\end{lemma}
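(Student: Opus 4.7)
The plan is to argue by contradiction using Krull-Schmidt. Pick a stable section $\Phi \in \Gamma(\cE)_s$ (available by the non-emptiness of $L(\cE)$), write $\cE = \bigoplus_j \cE_j$ as a direct sum of indecomposables, and suppose some $\cE_i$ has $\deg\cE_i \le 0$. Setting $\cE^c := \bigoplus_{j\ne i}\cE_j$, I would examine the composition of $\Phi$ with the projection $\pi_i:\cE \to \cE_i$, splitting into two cases according to whether $\pi_i\circ\Phi$ vanishes.

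If $\pi_i\circ\Phi = 0$, then $\Phi(\cO) \subseteq \cE^c$. One notes $\cE^c\ne 0$ (otherwise $\cE = \cE_i$ has non-positive degree) and $\Phi(\cO) \lneq \cE^c$ (otherwise $\cE^c \cong \cO$ and the projection $\cE \to \cE^c$ retracts $\Phi$, splitting \Cref{eq:origphi} contrary to \Cref{def:stabsect}\Cref{item:def:stabsect-stabquot}). Hence $\cE^c$ is a strictly intermediate subbundle with $\mu(\cE/\cE^c) = \mu(\cE_i) \le 0 < \deg\cE/(\rk\cE-1)$, directly contradicting \Cref{eq:def:stabsect-quot}.

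If instead $\pi_i\circ\Phi \ne 0$, then $H^0(\cE_i) \ne 0$, so by Atiyah's classification (\cite[Theorem 10]{Atiyah}, \cite[Appendix A, Fact]{tu}) one has $\deg\cE_i \ge 0$, which combined with the standing assumption $\deg\cE_i \le 0$ forces $\deg\cE_i = 0$ and $\cE_i \cong \tensor[_r]{\cO}{}$ for some $r \ge 1$ (the only indecomposable degree-$0$ bundles with non-zero sections, per the usage recalled in \Cref{cv:posdeg}). The case $r = 1$ once more splits \Cref{eq:origphi}, since the first component of $\Phi$ is a nonzero scalar providing a retraction. For $r\ge 2$, any nonzero map $\cO\to\tensor[_r]{\cO}{}$ factors through the canonical subbundle $\cL\cong\cO\hookrightarrow\tensor[_r]{\cO}{}$ (because $H^0(\tensor[_r]{\cO}{})\cong\Bbbk$ is spanned by that inclusion), so $\Phi(\cO)\subseteq \cK := \cL\oplus\cE^c$. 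Positivity of $\deg\cE$ forces $\cE^c\ne 0$ (else $\cE = \tensor[_r]{\cO}{}$ has degree $0$), making $\cK$ strictly intermediate with $\cE/\cK \cong \tensor[_{r-1}]{\cO}{}$ of slope $0$, again contradicting \Cref{eq:def:stabsect-quot}.

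I expect no serious obstacle here; the only care lies in handling the degenerate subcases in which a proposed intermediate subbundle collapses ($\cE^c = 0$; $\cE^c$ of rank one coinciding with $\Phi(\cO)$; $\cE_i = \cO$ with nonzero projection), each of which instead produces a splitting of \Cref{eq:origphi} and is therefore ruled out directly by the non-split half of \Cref{def:stabsect}\Cref{item:def:stabsect-stabquot}.
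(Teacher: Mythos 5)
Your argument is correct, but it takes a noticeably longer route than the paper's, which is a one-line application of the \emph{other} half of \Cref{def:stabsect}\Cref{item:def:stabsect-slopes}: if $0\ne\cE'\le_{\oplus}\cE$ had $\deg\cE'\le 0$, the complementary summand $\cE''$ would satisfy $\deg\cE''\ge\deg\cE$ and $\rk\cE''\le\rk\cE-1$, hence $\mu(\cE'')\ge\deg\cE/(\rk\cE-1)$, violating the subbundle bound \Cref{eq:def:stabsect-subb} outright --- with no need to know where $\Phi(\cO)$ sits. Because you chose instead to work with the quotient bound \Cref{eq:def:stabsect-quot}, which only applies to subbundles \emph{containing} $\Phi(\cO)$, you are forced to track the component of $\Phi$ along the offending summand, invoke Atiyah's classification of degree-zero indecomposables with sections, and dispose of several degenerate subcases via non-splitness. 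Those steps all check out (in particular the reduction to $\cE_i\cong\tensor[_r]{\cO}{}$, the use of $\dim H^0(\tensor[_r]{\cO}{})=1$ to force $\Phi$ into $\cL\oplus\cE^c$, and the identification $\cE/\cK\cong\tensor[_{r-1}]{\cO}{}$ are fine), so nothing is wrong; the only criticism is economy. What your version buys is a more explicit picture of how a stable section threads through the indecomposable summands, but for the statement at hand the complementary-summand observation suffices.
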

\begin{proof}
  Indeed, were there summand $0\ne \cE'\le_{\oplus}\cE$ of non-positive degree, a complementary summand would violate \Cref{eq:def:stabsect-subb}. 
\end{proof}

It will also be of some interest to understand how the various leaves $L(\cE)$ relate to one another: some lie in the closures of others, and the resulting ordering is bound to be illuminating in the sequel. To place the discussion in its proper context, we begin by noting that the stratification of $\bP^{n-1}\cong \bP\Ext^1(\cF,\cO)$ by the $L(\cE)$ is a refinement of the usual one by {\it type} \cite[\S 7, pp.565-566]{zbMATH03803614}. We recall some of the surrounding machinery. 

\begin{recollection}\label{rec:ab}
  \begin{enumerate}[(1), wide=0pt]
  \item Being of fixed rank and degree (i.e. $1^{st}$ Chern class in the usual, topological sense \cite[\S 14.2]{ms-cc}), the bundles $\cE$ appearing as middle terms in extensions \Cref{eq:origphi} are all $C^{\infty}$-isomorphic (\cite[p.2, Proposition]{zbMATH00975620} argues purely topologically, but the discussion goes through fine in the category of $C^{\infty}$ manifolds).

  \item We are thus in the setting of \cite[\S 7]{zbMATH03803614}, and can identify the {\it holomorphic} (or algebraic, by GAGA \cite[12., pp.19-20]{serre-gaga}) isomorphism classes of $\cE$ with the space $\cC$ of \cite[p.565]{zbMATH03803614}. Each $\cE$ admits a unique {\it Harder-Narasimhan (or HN) filtration} (\cite[Proposition 1.3.9]{zbMATH03506873}, \cite[Theorem 1]{zbMATH03577367}, \cite[(7.1)]{zbMATH03803614}, etc.)
    \begin{equation*}
      0=\cE_{0}\subset \cE_{1}\subset\cdots\subset \cE_{s}=\cE
      ,\quad
      \overline{\cE}_{i}:=\cE_{i}/\cE_{i-1}\text{ semistable}
    \end{equation*}
    with the slopes of the $\overline{\cE}_{i}$ strictly decreasing.
    
    Incidentally, since we are working on an elliptic curve, that filtration splits:
    \begin{equation*}
      \cE = \bigoplus_i \overline{\cE}_{i}.
    \end{equation*}
    This follows (say) from the semistability of indecomposable bundles \cite[Appendix A, Fact]{tu}, and justifies the term {\it Harder-Narasimhan (or HN) splitting} we will occasionally use. Other cognates of the term will occasionally appear: {\it HN summands} or perhaps {\it components}, etc.
    
  \item The ({\it HN} or {\it Harder-Narasimhan}) {\it type} $\nu$ of $\cE$ is defined to be the tuple
    \begin{equation}\label{eq:type}
      \nu=\nu(\cE) = \left((k_i,n_i),\ 1\le i\le s\right)
      ,\quad k_i:=\rk \overline{\cE_{i}}
      ,\quad n_i:=\deg \overline{\cE_{i}}.
    \end{equation}
    $\cC$ then admits \cite[Theorem 7.14]{zbMATH03803614} a stratification by
    \begin{equation*}
      \cC_{\nu}:=\left\{\text{classes in $\cC$ of type $\nu$}\right\}
    \end{equation*}
    {\it perfect} in the sense of \cite[\S 1, p.537]{zbMATH03803614}. This naturally lifts to
    \begin{equation*}
      \begin{aligned}
        \bP^{n-1}&=\bP\Ext^1(\cF,\cO) = \coprod_{\nu}\bP \Ext^1(\cF,\cO)_{\nu} =: \bP^{n-1}_{\nu},\\
        \bP \Ext^1(\cF,\cO)_{\nu} &:= \left\{\text{extensions \Cref{eq:origphi} with type-$\nu$ middle term}\right\}.
      \end{aligned}    
    \end{equation*}
    The pieces $\bP^{n-1}_{\nu}$ are again locally closed (see e.g. the discussion on \cite[p.611, preceding Proposition 15.4]{zbMATH03803614} or \cite[\S 11.1, concluding Remark or Corollary 15.4.3]{lepot-vb}), and offer a coarser partition than that into the symplectic leaves of \Cref{th:detleaves}.
    
  \item As for closures, \cite[Theorem 3]{zbMATH03577367} (also recalled as \cite[(7.8)]{zbMATH03803614}) gives useful information, recast in the present setting as 
    \begin{equation*}
      \overline{\bP^{n-1}_{\nu}}\subseteq \coprod_{\nu'\ge \nu}\bP^{n-1}_{\nu'},
    \end{equation*}
    where the ordering $\nu\le \nu'$ is best understood geometrically: the {\it Harder-Narasimhan polygon (HNP for short)} \cite[p.173]{zbMATH03577367}
    \begin{equation}\label{eq:hnp}
      HNP(\nu)=HNP(\cE):=\text{convex hull of }(0,0),\ (k_1,n_1),\ (k_1+k_2,n_1+n_2),\ \cdots
    \end{equation}
    lies within the analogous convex polygon $HNP(\nu')$.   
  \end{enumerate}  
\end{recollection}

The following picture is illustrative of HN polygons (see also \cite[diagram on p.173]{zbMATH03577367}):

\begin{equation*}
  \begin{tikzpicture}[>=stealth,auto,baseline=(current  bounding  box.center)]
    \path[anchor=base] 
    (0,0) node[circle,inner sep=1pt,draw,fill] (0) {$$}
    +(6,0) node (x) {$$}
    +(0,6) node (y) {$$}
    +(1,2) node[circle,inner sep=1pt,draw,fill] (1) {$$}
    +(2,3) node[circle,inner sep=1pt,draw,fill] (2) {$$}
    +(3,3.5) node[circle,inner sep=1pt,draw,fill,label=above:] (3) {$$}
    +(5,4) node[circle,inner sep=1pt,draw,fill] (4) {$$}
    ;
    \draw[->] (0) to[bend left=0] node[pos=.5,auto,swap] {$\scriptstyle \text{rank}$} (x);
    \draw[->] (0) to[bend left=0] node[pos=.5,auto] {$\scriptstyle \text{degree}$} (y);

    \foreach[evaluate=\n as \nextn using int(\n+1)] \n in {0,...,3}
    {  
      \draw[-] (\n) to[bend left=0] node[pos=.5,auto] {$\scriptstyle $} (\nextn);
    }
    
    
    \draw[-] (0) to[bend left=0] node[pos=.5,auto] {$\scriptstyle $} (4);
  \end{tikzpicture}
\end{equation*}


\begin{remarks}\label{res:fostrat}
  \begin{enumerate}[(1), wide=0pt]
  \item\label{item:res:fostrat-k1} Note the symbol reversal $k\leftrightarrow n$ when comparing the above to \cite[\S 7]{zbMATH03803614}: degrees ($n$ for us) are intended as $y$-coordinates, so that the algebraic geometer's slopes, defined as $\frac{\text{degree}}{\text{rank}}$, truly are geometric slopes. 
    
    Observe also that since we are interested in types of rank-$(k+1)$ middle terms $\cE$, the total width of the Harder-Narasimhan polygons of interest is $k+1$ (rather than $k$).
    
  \item\label{item:res:fostrat-charge} Following \cite[discussion preceding Remark 4.17]{BB-vb}, say, we refer to pairs of the form
    \begin{equation*}
      \begin{aligned}
        \zeta(\cE)  &:=  (\rk \cE,\chi (\cE))
                      \quad\text{for coherent $\cE$, with $\chi=$ Euler characteristic (\Cref{res:nowh0}\Cref{item:genslope})}\\
                    &= (\rk \cE,\deg \cE)
                      \quad\text{for locally free $\cE$}.
      \end{aligned}      
    \end{equation*}
    as {\it charges} (of the respective sheaves). As observed in \cite[paragraph preceding Definition 4.6]{BB-vb}, $\zeta$ is additive over exact sequences if its images are regarded as elements of the free abelian group $(\bZ^2,+)$. 
    
  \item The above discussion of stratifications and polygonal ordering is very much also in the spirit of the material on ``rough strata'' in \cite[\S 1 4.]{FO98}. The roughness again refers to the partition into type strata being coarser than that into isomorphism-class leaves.  
  \end{enumerate}  
\end{remarks}

In working within the framework set out by \Cref{rec:ab}, we write $\Delta_{P,Q,R}$ for the planar triangle with vertices $P$, $Q$ and $R$. Other convenient notation (and terminology):

\begin{itemize}[wide]
\item For $1\le k<n$ set
\begin{equation}\label{eq:sliver}
  \tensor[_{k,n}]{\Delta}{}
  :=
  \Delta_{(0,0),(k+1,n),(k,n)}.
\end{equation}

\item For bundles $\cF$ of charge $(k,n)$ we refer to the ``bottom'' edge of $HNP(\cF)$, connecting the origin $(0,0)$ and $(k,n)$ itself, as the {\it base} of the Harder-Narasimhan polygon.

\item Similarly, the {\it top} of $HNP(\cF)$ is the polygonal line excluding the base. It consists of the {\it top edges}. 
\end{itemize}

\begin{theorem}\label{th:whichmiddle}
  Let $1\le k<n$, and fix a stable bundle $\cF$ of charge $(k,n)$ (so in particular $\gcd(k,n)=1$). 

  The middle terms $\cE$ fitting into stable extensions \Cref{eq:origphi} are precisely the charge-$(k+1,n)$ bundles with
  \begin{itemize}[wide]
  \item $\det \cE = \det \cF$;
  \item and the top edges of $HNP(\cE)$ (excluding the two extreme vertices $(0,0)$ and $(k+1,n)$) strictly contained in the triangle $\tensor[_{k,n}]{\Delta}{}$ of \Cref{eq:sliver}. 
  \end{itemize}
\end{theorem}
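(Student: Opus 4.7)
I would split the biconditional into its two implications.

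\textbf{Necessary direction.} Start with a stable extension $0\to\cO\xrightarrow{\Phi}\cE\to\cF\to 0$. Additivity of determinants over the short exact sequence gives $\det\cE=\det\cF$. Write $\cE=\bigoplus_{i=1}^s\cE_i$ for the HN splitting, with slopes $\mu_1>\cdots>\mu_s$, and set $\cE^{(i)}=\cE_1\oplus\cdots\oplus\cE_i$ (a subbundle of $\cE$ of charge $P_i=(K_i,N_i)$). For $1\le i<s$, $\cE^{(i)}$ is a proper subbundle, so stability of $\Phi$ via \Cref{def:stabsect}\Cref{item:def:stabsect-slopes} forces $\mu(\cE^{(i)})=N_i/K_i<n/k=\mu(\cF)$, placing $P_i$ strictly below the edge from $(0,0)$ to $(k,n)$. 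Strict decrease of the $\mu_i$ entails strict decrease of the cumulative slopes $\bar\mu_i=N_i/K_i$, whence $\bar\mu_i>\bar\mu_s=n/(k+1)$, placing $P_i$ strictly above the chord from $(0,0)$ to $(k+1,n)$. Since $K_i\le k$ (as $k_s\ge 1$), we also get $N_i<nK_i/k\le n$. Thus every intermediate $P_i$ sits in the open interior of $\tensor[_{k,n}]{\Delta}{}$.

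\textbf{Sufficient direction.} Assume the hypotheses. The condition on intermediate HN vertices is equivalent to the single slope bound $\mu_{\max}(\cE)<n/k$, and strict concavity then forces $\mu_{\min}(\cE)>0$ automatically. My plan is to produce a surjection $\phi\colon\cE\twoheadrightarrow\cF$: since its kernel is a line bundle with determinant $\det\cE\otimes(\det\cF)^{-1}\cong\cO$, it is automatically isomorphic to $\cO$, giving the extension. That extension is non-split because $\mu_{\max}(\cE)<n/k$ excludes $\cE\cong\cO\oplus\cF$ (whose maximal slope would be $n/k$), and stability of the resulting section then follows from \Cref{def:stabsect}\Cref{item:def:stabsect-stabquot} using that $\cF$ is stable by hypothesis. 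Cohomologically, $\cE^*\otimes\cF=\bigoplus_i\cE_i^*\otimes\cF$ with each summand semistable of positive slope $n/k-\mu_i>0$, so $H^1=0$ and $\dim\Hom(\cE,\cF)=\chi(\cE^*\otimes\cF)=n$.

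\textbf{Non-emptiness of surjections; main obstacle.} Surjectivity is open in $\Hom(\cE,\cF)$, and the complement consists of maps $\phi$ with $\phi_p$ of rank $<k$ at some $p\in E$. Since the rank-drop stratum in each $k(k+1)$-dimensional fiber $\underline{\Hom}(\cE_p,\cF_p)$ has codimension $2$, the bad locus in $\Hom(\cE,\cF)\times E$ has dimension at most $n-1$, so its projection to $\Hom(\cE,\cF)$ is a proper closed subset and surjections populate the non-empty open complement. The main subtlety is making this transversality count rigorous: it would follow at once from $\cE^*\otimes\cF$ being globally generated, but this can fail when some $\mu_i$ is very close to $n/k$ (the corresponding tensor summand has small positive slope, and positive-slope bundles on elliptic curves need not be globally generated). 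I would bypass this with a summand-by-summand construction, choosing $\psi_i\in\Hom(\cE_i,\cF)$ (nonzero because $\dim\Hom(\cE_i,\cF)=k_in-kn_i>0$ by the slope bound, with $\sum_i(k_in-kn_i)=n$) such that the cumulative images $\sum_i\im\psi_i$ fill out $\cF$; the fact that $\sum_ik_i=k+1\ge k$ together with the genericity of the $\psi_i$ makes this possible. Alternatively, a specialization argument reducing to the semistable base case $s=1$ should work, and the $k=1$ analysis in \cite{00-leaves_xv3} provides the template.
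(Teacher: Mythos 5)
Your necessity argument is correct and is essentially the paper's: $\det\cE=\det\cF$ by multiplicativity of determinants in \Cref{eq:origphi}, and the two inequalities $n/(k+1)<N_i/K_i<n/k$ (the upper one from \Cref{def:stabsect}\Cref{item:def:stabsect-slopes} applied to the cumulative HN subbundles, the lower one from strict decrease of cumulative slopes) are exactly the strict containment of the top of $HNP(\cE)$ in $\tensor[_{k,n}]{\Delta}{}$. Your reformulation of sufficiency is also sound as far as it goes: a surjection $\cE\to\cF$ has locally free kernel of charge $(1,0)$ with trivial determinant, hence kernel $\cO$; the extension cannot split since $\cO\oplus\cF$ has $(k,n)$ as an $HNP$ vertex; stability of the section is then \Cref{def:stabsect}\Cref{item:def:stabsect-stabquot}; and $\dim\Hom(\cE,\cF)=n$ is right.

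The gap is the one you flag yourself, and it is not a technicality: producing a single surjection $\cE\to\cF$ \emph{is} the sufficiency direction, and none of your three routes closes it. The fibrewise codimension-$2$ count needs the evaluation maps $\Hom(\cE,\cF)\to\underline{\Hom}(\cE,\cF)_p$ to meet the degeneracy loci properly, which fails precisely when some $\cE_i^*\otimes\cF$ has small positive slope (e.g.\ a stable summand of rank $\ge 2$ and degree $1$ has a one-dimensional space of sections, nowhere near generating). The summand-by-summand variant hits the same wall: when $k_in-kn_i=1$ the map $\psi_i$ is rigid, so ``genericity of the $\psi_i$'' buys nothing there, while a generic $\psi_j$ on a rank-$k$ summand is never surjective by itself (its determinant is a section of a positive-degree line bundle and must vanish), so you still must exclude coincidences between $\im(\psi_i)_p$ and $\im(\psi_j)_p$ at the finitely many degenerate points --- an unproved transversality claim of exactly the original kind. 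The specialization idea points the wrong way: surjectivity is open, so it spreads from a special member to the general one only after a special surjection has been exhibited. The paper sidesteps $\Hom(\cE,\cF)$ entirely: it takes a generic $\Phi\in\Gamma(\cE)$ (again an $n$-dimensional space), kills torsion in $\cE/\im\Phi$ by bounding $\bigcup_{z\in E}\Gamma(\cE(-z))$, and shows the torsion-free quotient is generically indecomposable --- hence stable, hence $\cong\cF$ by uniqueness of stable bundles with fixed determinant --- because a decomposable quotient type with $s\ge 2$ summands contributes only an $s$-dimensional family of candidates $\cF'$ while each carries $\dim\Aut(\cF')\ge s+1$ acting trivially on kernels, a net loss of dimension against the indecomposable type. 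You would need an argument of comparable substance to show $\Epi(\cE,\cF)\ne\emptyset$.
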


The following simple preliminary observation seems more difficult to locate explicitly than its converse (in various forms: \cite[Proposition 4]{zbMATH03577367}, \cite[(7.5)]{zbMATH03803614}, \cite[Proposition 5.3.3]{lepot-vb}, \cite[Lemma 14.1]{Polishchuk-book}, \cite[Exercise 6 following \S 10.4]{muk-invmod}, etc.). We record it here with a proof, for completeness.

\begin{lemma}\label{le:morbetslopes}
  If $\cE$ and $\cE'$ are bundles on an elliptic curve with $\mu(\cE)<\mu(\cE')$ then there are non-zero morphisms $\cE\to \cE'$. 
\end{lemma}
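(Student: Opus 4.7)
The plan is to reduce the existence of a non-zero morphism to a dimension count for the space of global sections of the internal-Hom bundle, and then compute that dimension via Riemann--Roch, exploiting the fact that the canonical bundle on $E$ is trivial so Euler characteristics equal degrees.

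Concretely, I would first rewrite
\[
\Hom(\cE,\cE') \;=\; H^0\bigl(\underline{\Hom}(\cE,\cE')\bigr) \;=\; H^0(\cE^*\otimes\cE'),
\]
valid because $\cE$ and $\cE'$ are locally free. By Riemann--Roch on the elliptic curve $E$ (where $g=1$, so $\chi(\cG) = \deg\cG$ for any vector bundle $\cG$), we have
\[
\chi(\cE^*\otimes\cE') \;=\; \deg(\cE^*\otimes\cE').
\]
Using the standard multiplicativity of degree under tensor products of bundles, namely $\deg(\cA\otimes\cB)=\rk(\cA)\deg(\cB)+\rk(\cB)\deg(\cA)$, together with $\rk(\cE^*)=\rk(\cE)$ and $\deg(\cE^*)=-\deg(\cE)$, one obtains
\[
\deg(\cE^*\otimes\cE') \;=\; \rk(\cE)\deg(\cE') - \rk(\cE')\deg(\cE) \;=\; \rk(\cE)\rk(\cE')\bigl(\mu(\cE')-\mu(\cE)\bigr).
\]
The assumption $\mu(\cE)<\mu(\cE')$ then makes this strictly positive, so $\chi(\cE^*\otimes\cE')>0$. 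Since $h^0-h^1=\chi>0$, we conclude $h^0(\cE^*\otimes\cE')>0$, yielding a non-zero global section of $\underline{\Hom}(\cE,\cE')$, i.e.\ a non-zero morphism $\cE\to\cE'$.

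There is essentially no obstacle here beyond bookkeeping: the key input is just that $g(E)=1$ makes the Euler-characteristic side of Riemann--Roch coincide with the degree, removing the usual rank-dependent correction term $\rk(\cG)(1-g)$. Everything else is the standard computation of the slope of an internal Hom. The only point worth flagging explicitly in the write-up is the sign convention $\mu(\underline{\Hom}(\cE,\cE'))=\mu(\cE')-\mu(\cE)$, to make the slope inequality visibly the right one.
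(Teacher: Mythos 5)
Your proof is correct, but it takes a genuinely different and in fact more economical route than the paper. You observe that Riemann--Roch on a genus-one curve gives $\chi(\cE^*\otimes\cE')=\deg(\cE^*\otimes\cE')=\rk(\cE)\rk(\cE')\bigl(\mu(\cE')-\mu(\cE)\bigr)>0$, and since $h^0-h^1=\chi$ with $h^1\ge 0$ this forces $h^0(\cE^*\otimes\cE')>0$; no further structure theory is needed. The paper instead first reduces to the case where both bundles are semistable, by replacing $\cE$ with its lowest-slope Harder--Narasimhan summand and $\cE'$ with its highest-slope one (legitimate because the HN filtration splits on an elliptic curve, so such a morphism can be included back into $\Hom(\cE,\cE')$), and then invokes two nontrivial inputs: semistability of tensor products of semistable bundles and Tu's result that a semistable bundle of positive degree has $h^0$ equal to its degree. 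What the paper's longer route buys is the vanishing of $h^1$ and hence the \emph{exact} dimension $\dim\Hom(\cE,\cE')=\deg(\cE'\otimes\cE^*)$ in the semistable case, which is the sharper statement recorded in the subsequent remark; your argument only certifies non-vanishing, but that is all the lemma as stated asks for, and for that purpose your proof is cleaner.
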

\begin{proof}
  Replace $\cE$ with its lowest-slope summand in the HN splitting and similarly, replace $\cE'$ with its {\it largest}-slope HN summand. The former slope can only decrease and the latter can only increase in the process \cite[\S 2 (B), p.167]{zbMATH03577367}, so the inequality is retained. We can thus assume without loss of generality that $\cE$ and $\cE'$ are semistable.

  The dual $\cE^*$ being semistable \cite[Lemma 10.23]{muk-invmod}, so is the tensor product $\cE'\otimes \cE^*$ \cite[Theorem 10.2.1]{lepot-vb}. Furthermore, that tensor product has slope
  \begin{equation}\label{eq:mue'east}
    \mu(\cE'\otimes \cE^*)
    \xlongequal{\ \text{\cite[Theorem 10.2.1]{lepot-vb}}\ }
    \mu(\cE')+\mu(\cE^*)
    =
    \mu(\cE')-\mu(\cE),
  \end{equation}
  assumed positive. But then
  \begin{equation}\label{eq:dimhomee'}
    \dim\Hom(\cE,\cE')
    =
    \dim\Gamma(\cE'\otimes \cE^*)
    \xlongequal{\ \text{\cite[Lemma 17]{tu}}\ }
    \deg\left(\cE'\otimes \cE^*\right) > 0,
  \end{equation}
  and we are done
\end{proof}

\begin{remarks}\label{res:le:morbetslopes}
  \begin{enumerate}[(1),wide=0pt]
  \item\label{item:bb-dimhom} For {\it indecomposable} sheaves the result is stated in \cite[Summary following Corollary 4.25]{BB-vb} in the more precise form saying that the dimension of $\Hom(\cE,\cE')$ is
    \begin{equation*}
      \deg(\cE')\rk(\cE) - \deg(\cE)\rk(\cE') = \text{\Cref{eq:dimhomee'}}.
    \end{equation*}

  \item\label{item:res:le:morbetslopes:strictmuneeded} The {\it strict} inequality is necessary in \Cref{le:morbetslopes}: non-trivial degree-0 line bundles have no non-zero sections, i.e. admit no non-zero morphisms from $\cO$. 
  \end{enumerate}
\end{remarks}

\pf{th:whichmiddle}
\begin{th:whichmiddle}
  \begin{enumerate}[{}, wide=0pt]
  \item {\bf ($\xRightarrow{}$):} That is, we address the necessity of the two conditions. The determinants are equal because determinants are generally multiplicative in exact sequences and the leftmost (interesting) term in \Cref{eq:origphi} is trivial.

    The strict containment, on the other hand, follows from the stability requirement. Indeed, that constraint demands that the summands in the HN splitting of $\cE$ have slopes smaller (strictly) than $\frac nk$, and they in any case have slopes $\ge \mu(\cE)=\frac{n}{k+1}$. But this means precisely that the edges of $HNP(\cE)$ lie inside $\tensor[_{k,n}]{\Delta}{}$, not touching the upper border of that triangle. 
    
  \item {\bf ($\xLeftarrow{}$):} It will be enough to argue that for $\cE$ meeting the requirements on $HNP(\cE)$, some section
    \begin{equation*}
      \Phi\in H^0(\cE)^{\times}
      :=
      H^0(\cE)\setminus \{0\}
    \end{equation*}
    is stable. Or: we have to show that for some $\Phi$ the quotient $\cF':=\cE/\im \Phi$ is indecomposable (for then that quotient will also be stable \cite[Corollary 14.8]{Polishchuk-book}, having coprime rank and degree $k$ and $n$ respectively). There are a number of issues to address.

    \begin{enumerate}[(I),wide=0pt]
    \item {\bf : Torsion.} That the quotient $\cF'$ is torsion-free for $s$ ranging over a dense open subset of $\Gamma(\cE)^{\times}$ follows as in \cite[proof of Proposition 3.20 and/or Remark 3.21]{00-leaves_xv3}: appending back $0\in\Gamma(\cE)$, the undesirable locus of $\Phi\in \Gamma(\cE)$ (i.e. $\Phi=0$ or such that $\cF'$ {\it does} have torsion) is
      \begin{equation*}
        \bigcup_{z\in E}\Hom(\cO(z),\cE)
        =
        \bigcup_{z\in E}\Gamma(\cE(-z)). 
      \end{equation*}
      That subset of $\Gamma(\cE)$ is closed as in the aforementioned result, and a simple dimension count shows that it must be proper.

    \item\label{item:th:whichmiddle:torsfree} {\bf : $\cF'$ assumed torsion-free.} The vector-bundle quotients of the fixed $\cE$ fall into finitely many HN types \Cref{eq:type} (a familiar {\it boundedness} principle; see e.g. \cite[Example preceding Proposition 5.1.1]{lepot-vb}), so it is enough to argue that for any fixed
      \begin{equation*}
        \nu=\left((k_i,n_i),\ 1\le i\le s\right)
      \end{equation*}
      the space of sections $\Phi\in\Gamma(\cE)^{\times}$ with $\nu\left(\cE/\im \Phi\right) = \nu$ has dimension {\it strictly} smaller than $\dim\Gamma(\cE)=\deg \cE$ (the latter equality follows from \cite[Lemma 17]{tu}, given that the indecomposable summands of $\cE$ are all assumed of positive degree). Equivalently, it will do to prove that for types $\nu$ with at least {\it two} components (i.e. types of decomposable bundles)
      \begin{equation*}
        \Gamma(\cE)_{\nu}:=\left\{\Phi\in \Gamma(\cE)^{\times}\ |\ \nu\left(\cE/\im \Phi\right) = \nu\right\}
      \end{equation*}
      has dimension strictly smaller than that of the corresponding space $\Gamma(\cE)_{\nu_0}$ for the indecomposable type $\nu_0:=((k,n))$. 
      
      Having fixed $\nu$, each of the $s$ fixed-charge indecomposable summands ranges over a parameter space isomorphic to $E$ \cite[Theorem 10]{Atiyah}. On the other hand, for each fixed type-$\nu$ bundle $\cF'$ the sections $\Phi$ with
      \begin{equation}\label{eq:quotisof'}
        \cE/\im \Phi\cong \text{the already-fixed }\cF'
      \end{equation}
      are the isomorphisms of $\cO$ onto the kernels of the surjections $\cE\xrightarrow{}\cF'$. 
      
      The space
      \begin{equation}\label{eq:epief'}
        \left\{\text{epimorphisms }\cE\xrightarrowdbl{}\cF'\right\}=:\Epi(\cE,\cF')\subset \Hom(\cE,\cF')
      \end{equation}
      is Zariski-open by the usual semicontinuity argument, e.g. as in \cite[Lemma 2.6.1]{lepot-vb}: matrices have generically large rank, etc. If we assume it non-empty, \Cref{eq:epief'} will also be dense and hence a scheme of dimension $\dim\Hom(\cE,\cF')$, independent of $\cF'$ (among the possible choices: direct sums of positive-degree indecomposable summands, with $\det\cF'=\det \cE$).

      The kernels of epimorphisms $\cE\xrightarrowdbl{}\cF'$ are unaffected by composing with automorphisms of $\cF'$; if the HN splitting of $\cF'$ consists of $s\ge 2$ summands, then the automorphism group of $\cF'$ is at least $(s+1)$-dimensional: each summand can be scaled independently, and there are non-zero morphisms from summands of smaller slope to those of higher (\Cref{le:morbetslopes}). On the other hand, the only endomorphisms of the indecomposable (hence also stable \cite[Appendix A, Fact]{tu}) $\cF'$ are scalars (\cite[Corollary 5.3.4]{lepot-vb}, \cite[Proposition 10.24]{muk-invmod}, etc.).

      The conclusion follows: only for decomposable types $\nu$ does the dimension $\dim\Aut(\cF')\ge s+1$ {\it more} than counteract the dimension $s$ of the parameter space for the possible $\cF'$.
    \end{enumerate}
  \end{enumerate}
  This completes the proof of the claim. 
\end{th:whichmiddle}

\begin{remark}\label{re:recrk2class}
  For line bundles $\cF$ of degree $n\ge 3$ \Cref{th:whichmiddle} recovers the classification \cite[Proposition 3.4]{00-leaves_xv3} of possible middle terms $\cE$ in \Cref{eq:origphi}. Indeed, in that case $k=1$, so that $k+1=2$. For that reason, specifying a convex polygon $HNP(\cE)$ strictly inside
  \begin{equation*}
    \tensor[_{k,n}]{\Delta}{}
    =
    \Delta_{(0,0),(2,n),(1,n)}
  \end{equation*}
   and with the same base (in the sense of the discussion preceding \Cref{th:whichmiddle}) simply means selecting a $y$-coordinate for the third (i.e. different from $(0,0)$ and $(2,n)$) vertex $(1,m)$. The $m$ ranges over $\left[\frac n2,n\right)$, and for decomposable $\cE$ it is the larger of the two degrees of the line-bundle summands of $\cE$. 
\end{remark}




\addcontentsline{toc}{section}{References}

\def\cprime{$'$}

\Addresses

\end{document}